\newcommand{\gateaux}{G\^ateaux}
\newcommand{\frechet}{Fr\'echet}
\newcommand{\real}{\mathbb{R}}
\newcommand{\Z}{\mathbb{Z}}
\newcommand{\e}{\varepsilon}
\newcommand{\F}{R}
\newcommand{\Del}{D}
\newcommand{\mset}{M}
\newcommand{\sset}{W}
\newcommand{\setseq}{\mathfrak S}
\newcommand{\seqn}[2]{#1^{(#2)}}
\newcommand{\pairorder}[6]{(#1,#2)\underset{(#5,#6)}{\le}(#3,#4)}
\renewcommand{\phi}{\varphi}
\theoremstyle{definition}
\newtheorem{theorem}[subsection]{Theorem}
\newtheorem{algorithm}[subsection]{Algorithm}
\theoremstyle{plain}
\newtheorem{lemma}[subsection]{Lemma}
\newtheorem{corollary}[subsection]{Corollary}
\numberwithin{equation}{section}
\newcommand{\bib}{
}
\begin{document}
\title[A compact null set containing a differentiability point]{A compact null set containing a differentiability point of every Lipschitz function}
\thanks{The authors acknowledge support of EPSRC grant EP/D053099/1.}   
\author{Michael Dor\'e}
\address{School of Mathematics,
University of Birmingham,
Edgbaston,
Birmingham B15 2TT,
UK}
\email{M.J.Dore@bham.ac.uk}
\author{Olga Maleva}
\address{School of Mathematics,
University of Birmingham,
Edgbaston,
Birmingham B15 2TT,
UK}
\email{O.Maleva@bham.ac.uk}
\subjclass[2000]{Primary 46G05; Secondary 46T20}
\maketitle
\begin{abstract}
We prove that in a Euclidean space of dimension at least two, there
exists a compact set of Lebesgue measure zero such that any
real-valued Lipschitz function defined on the space is differentiable
at some point in the set. Such a set is constructed explicitly.
\end{abstract}

\section{Introduction}
\label{sec1}
\subsection{Background}
A theorem of Lebesgue says that any real-valued
Lipschitz function on the real line is differentiable almost everywhere.
This result is sharp in the sense that for any subset $E$ of the real line with Lebesgue measure zero,
there exists a real-valued Lipschitz function not differentiable
at any point of $E$.
The exact characterisation of the possible sets of non-differentiability of a Lipschitz function $f\colon\real\to\real$ is given in \cite{Z}.

For Lipschitz mappings between Euclidean spaces of higher dimension,
the interplay between Lebesgue null sets and
sets of points of non-differen\-ti\-abili\-ty is less straightforward.
By Rademacher's theorem,  any real-valued Lipschitz mapping
on $\real^n$ is differentiable except on a Lebesgue null set.
However, Preiss \cite{P} gave an example of a
Lebesgue null set $E$ in $\real^n$, for $n \geq 2$,
such that $E$ contains a point of differentiability of
\emph{every} real-valued Lipschitz function on $\real^n$.

In particular, \cite{P} shows that the latter property holds whenever
$E$ is a $G_\delta$-set in $\real^{n}$ - i.e. an intersection of
countably many open sets - such that $E$ contains
all lines passing through two points with rational coordinates.
However, this set is dense in $\real^n$.

In the present paper we construct a
much ``smaller'' set in $\real^{n}$ for $n\geq 2$ --- a
\emph{compact}
Lebesgue null set --- that still captures a point of differentiability of
every Lipschitz function $f\colon  \real^{n} \rightarrow \real$.

It is important to note that though, setting $n = 2$, any
Lipschitz function $f\colon \real^2\to\real$ has points of differentiability
in such an extremely small set as ours, for any Lebesgue null set $E$ in the
plane there is a pair of real-valued Lipschitz functions on $\real^2$
with no common points of differentiability in $E$ \cite{ACP}.

Only a few positive results are known about
the case where the codomain is a space of dimension at least two.
For $n\ge 3$, there exists a Lebesgue null
set in $\real^n$, namely the union of all ``rational hyperplanes'', such that for all $\e>0$
every Lipschitz mapping from $\real^n$ to $\real^{n-1}$ has a point of
$\e$-\frechet{} differentiability in that set; see \cite{PH}.

\subsection{}
Let us say a few words about why the method of \cite{P} does not yield a
set with the properties we are aiming for. Indeed,
\cite[Theorem~6.4]{P} says that every Lipschitz function
defined on $\real^n$ is differentiable at some point of a
$G_\delta$-set $E$ if $E$ satisfies certain conditions, in particular for any two points $u,v\in\real^n$ and any
$\eta>0$, the set $E$ contains a large portion of a path that approximates the line segment $[u,v]$
to within $\eta\|u-v\|$. The closure of such a set $E$ is the
whole space $\real^n$.

There is, however, a stronger version of \cite[Theorem~6.4]{P} that only requires a local version of this condition for the same conclusion to hold: namely
for every $\e>0$ and every $x\in E$ there is a neighbourhood of
$x$ in which any line segment $I$ can be approximated to within $\e |I|$ by a curve in $E$.
Let us explain why the closure of any $G_\delta$-set with this property has
non-empty interior and hence is of positive measure.

Indeed, by this ``local approximation'' property
there is an open ball $B$ intersecting $E$ and
a positive $\eta$,
such that each open  $U\subseteq B$ that intersects $E$ contains
a point $x'\in U\cap E$ with the following property:
any line segment
$I\subseteq B$ through $x'$ of length at most $\eta$ is
pointwise $|I|/2$-close to
a curve inside $E$.
It follows that $E$ is dense in $B$.

Thus in order to construct a closed set of measure zero containing points of
differentiability of every Lipschitz function, we introduce crucial new
steps, outlined in Subsection~\ref{method}.
Before describing our approach we need some preliminaries.

\subsection{Preliminaries}\label{prelim}
Given real Banach spaces $X$ and $Y$,
a mapping $f\colon X\to Y$ is called Lipschitz if there exists
$L \geq 0$ such that $\|f(x)-f(y)\|_Y\le L\|x-y\|_X$ for all
$x,y\in X$.
The smallest such constant $L$ is denoted
$\mathrm{Lip}(f)$.

If $f\colon X\to Y$ is a mapping, then
$f$ is said to be \gateaux{} differentiable at $x_0\in X$ if
there exists a bounded linear operator $D\colon X\to Y$ such that
for every $u\in X$, the limit
\begin{equation}\label{dirderiv}
\lim_{t\to0} \frac{f(x_0+tu)-f(x_0)}{t}
\end{equation}
exists and is equal to $D(u)$. The operator $D$ is called the
\gateaux{} derivative of $f$ at the point $x_0$ and is written $f'(x_0)$.
If this limit exists for some fixed $u$ we say that $f$ has a
directional derivative at $x_0$ in the direction $u$ and denote
the limit by $f'(x_0,u)$.

If $f$ is \gateaux{} differentiable at $x_0$ and the convergence in \eqref{dirderiv} is uniform for $u$ in the unit sphere
$S(X)$ of $X$,
we say that $f$ is \frechet{} differentiable at $x_0$ and
call $f'(x_0)$ the \frechet{} derivative of $f$.

Equivalently, $f$ is \frechet{} differentiable at $x_0$ if we can find a bounded linear operator $f'(x_0)\colon  X \rightarrow Y$ such that for every $\e > 0$
there exists a $\delta > 0$ such that for any $h \in X$ with $\|h\| \leq \delta$ we have
$$
\|f(x_0+h)-f(x_0)-f'(x_0)(h) \| \leq \e \|h\|.
$$

If, on the other hand, we only know this condition for some fixed $\e > 0$ we say that $f$ is $\e$-\frechet{} differentiable at $x_0$.
Note that $f$ is \frechet{} differentiable at $x_0$ if and only if it is
$\e$-\frechet{} differentiable at $x_0$ for every $\e>0$.
In \cite{JLPS,LP} the notion of $\e$-\frechet{} differentiability
is studied in relation to
Lipschitz mappings with the
emphasis on the infinite dimensional case.

In general, \frechet{} differentiability is a strictly stronger property than \gateaux{} differentiability. However
the two notions coincide for Lipschitz functions defined on a
finite dimensional space; see \cite{BL}.

We now make some comments about the porosity property and its connection with
the \frechet{} differentiability of Lipschitz functions.
Recall first that a subset $A$ of a Banach space $X$ is said to be porous at a point $x \in X$
if there exists $\lambda>0$ such that for all $\delta > 0$
there exist $r\le\delta$ and $x'\in B(x,\delta)$ such that
$r>\lambda\|x-x'\|$ and $B(x',r)\cap A=\emptyset$. Here $B(x,\delta)$ denotes an open ball in the Banach space $X$ with centre at $x$ and radius $\delta$.

A set $A \subseteq X$ is called porous if it is porous at every $x\in A$.
A set is said to be $\sigma$-porous if it can be written as a countable union of porous sets.
The family of $\sigma$-porous subsets of $X$ is a $\sigma$-ideal.
A comprehensive survey on porous
and $\sigma$-porous sets can be found in \cite{Zpor}.

Observe that for a non-empty set $A$
the distance function $f(x)=\mathrm{dist}(x,A)$ is Lipschitz with
$\mathrm{Lip}(f) \leq 1$
but is not \frechet{} differentiable
at any porosity point of the set $A$ \cite{BL}. Moreover if $A$ is a $\sigma$-porous subset of a separable
Banach space $X$
we can find a Lipschitz function from $X$ to $\real$
that is not \frechet{} differentiable at any point of $A$. This is proved in \cite{PT} for the case in which $A$ is a countable union of closed porous
sets and, as per remark in \cite[Chapter 6]{BL}, the proof of
\cite[Proposition 14]{PZ} can be used to derive this
statement for an arbitrary $\sigma$-porous set $A$.

The set $S$ we are constructing in this paper
contains a point of differentiability of every Lipschitz function, so
we require $S$ to be non-$\sigma$-porous.
Such a set should also have plenty of non-porosity points.
By the Lebesgue density theorem every
$\sigma$-porous subset of a finite-dimensional space
is of Lebesgue measure zero.
We remark that the $\sigma$-ideal of $\sigma$-porous sets is
a proper subset of that of Lebesgue null sets.
In order to arrive at an appropriate set that is not $\sigma$-porous, has no porosity points and whose
closure has measure zero, we use ideas similar to those in \cite{Z76,Z98,ZP}.

\subsection{Construction}\label{method}
We now outline the method we use to prove that the set $S$
we construct contains a differentiability point of every Lipschitz
function.

Given a Lipschitz function $f\colon \real^n\to\real$, we first find a
point $x \in S$ and a direction $e \in S^{n-1}$, the unit sphere of
$\real^{n}$, such that the directional derivative $f'(x,e)$ exists and
is locally maximal in the sense that if $\e > 0$, $x'$ is a nearby
point of $S$, $e' \in S^{n-1}$ is a direction and $(x',e')$ satisfies
appropriate constraints, then $f'(x',e') < f'(x,e)+\e$.

We then prove $f$ is differentiable at $x$ with derivative
$$
D(u) = f'(x,e)\langle u,e \rangle.
$$
A heuristic outline goes as follows. Assume this is not true. Find
$\eta > 0$ and a vector $\lambda$ with small norm
such that $|f(x+\lambda)-f(x)-f'(x,e)\langle \lambda,e \rangle| >
\eta\|\lambda\|$.
Then construct an auxiliary point $x+h$ lying near the
line $x+\real e$ and calculate the
ratio
$$
\frac{|f(x+\lambda)-f(x+h)|}{\|\lambda-h\|}.
$$
We find that this is at least $f'(x,e)+\e$ for some $\e > 0$. By using
an appropriate mean value theorem
\cite[Lemma 3.4]{P},
it is possible to find a point $x'$ on the line segment
$[x+h,x+\lambda]$ and a direction $e' \in S^{n-1}$ such that
$f'(x',e') \geq f'(x,e)+\e$ and $(x',e')$ satisfies the required
constraints. This contradicts the local maximality of $f'(x,e)$ and so
$f$ is differentiable at $x$.

Since $f'(x,e)$ is only required to be locally maximal for $x$ in the
set $S$, it is necessary to ensure the above line segment
$[x+h,x+\lambda]$ lies in $S$, if we are to get a contradiction. It is
therefore vital to construct $S$ so that it contains lots of line segments.

Crucially, instead of just one set, we introduce a hierarchy of
closed null sets
$\mset_i$, indexed by sequences $i$ of real numbers that are subject
to a certain partial ordering. For any point $x$ in $\mset_i$
the required line segments $[x+h,x+\lambda]$
can be found in every set $\mset_{j}$ where
$j$ is greater than $i$ in the sense of the partial order.
Subsequently we prove in Corollary~\ref{corfinal} that each set $\mset_i$
contains a point of differentiability of every Lipschitz function.
The
desired set $S$ can then be taken equal to the intersection of
any of the $\mset_i$ with a closed ball.

\subsection{Structure of the paper}
Section~\ref{sec2} is devoted to the description of the partial
ordered set and the layers $\mset_i$. The existence of line segments close to any point in a previous layer is verified in
Theorem~\ref{resultofsection}. In Section~\ref{sec5} we will show that
this condition is sufficient for any Lipschitz function to have a point of
differentiability in each layer.

In Section~\ref{sec3} we show in detail how to arrive at a pair
$(x,e)$ with ``almost maximal'' directional derivative $f'(x,e)$.
By a modification of the method in \cite{P} we construct a sequence of
points $x_m$ and directions $e_m \in S^{n-1}$ such that $f$
has a directional derivative $f'(x_{m},e_{m})$
that is almost maximal, subject to some constraints.
We then argue that $(x_m)$ and $(e_m)$ both converge and that the
directional derivative $f'(x,e)$ at $x = \lim_{m \to \infty} x_m$ in
the direction $e = \lim_{m \to \infty} e_m$ is locally maximal in the
required sense. We eventually show $x$ is a point of differentiability of
$f$.

The convergence of $(x_m)$ is achieved simply by choosing $x_{m+1}$
close to $x_m$. The convergence of $e_m$ is more subtle; we obtain
this by altering the function by an appropriate small linear piece at
each stage of the iteration. Then picking $(x_m,e_m)$ such that the
$m$th function $f_m$ has almost maximal directional derivative
$f_m'(x_m,e_m)$ can be shown to guarantee that the sequence $(e_m)$ is
Cauchy.

In Section~\ref{sec4} we introduce a Differentiability Lemma, ~\ref{lemdiff}, showing that under certain conditions such a pair $(x,e)$, with $f'(x,e)$ almost maximal, gives a point $x$ of \frechet{} differentiability of $f$.

Finally in Section~\ref{sec5} we verify the conditions of this Differentiability
Lemma~\ref{lemdiff} for the pair $(x,e)$ constructed in Section~\ref{sec3}, using the results of Section~\ref{sec2}. This completes the proof.

\subsection{Related questions}
To conclude the introduction let us observe the following.
Independently of our
construction, one can deduce
from  \cite{B,HMZW}
that there exists a non-empty Lebesgue null set $E$ in the plane with a
weaker property:
$E$ is $F_\sigma$ - i.e. a countable union of closed sets - and
contains a point of sub-differentiability of every real-valued Lipschitz
function.

Indeed, in \cite{B} it is proved that
there exist a non-empty open set $G\subseteq\real^2$, a differentiable
function $f\colon G\to\real$ and a non-empty open set
$\Omega\subseteq\real^2$ for which there exists a point $p\in G$ such
that the gradient $\nabla f(p)\in\Omega$ but $\nabla f(q)\notin\Omega$
for almost
all $q\in G$, in the sense of two dimensional Lebesgue measure.
In other words, the set $E=(\nabla f)^{-1}(\Omega)\cap G$ is a
non-empty set of Lebesgue measure zero.
Note that $\nabla f$ is a Baire-$1$ function; therefore
the set $E$, which is a preimage of an open set,
is an $F_\sigma$ set. Now \cite[Lemma 4]{HMZW}
implies that any Lipschitz function $h\colon \real^2\to\real$ has a point of
sub-differentiability in $E$.

\subsection{Acknowledgement}
The authors wish to thank Professor David Preiss for stimulating
discussions.

\section{The set}

\label{sec2}
\label{secconstr}
Let $(N_r)_{r\geq 1}$ be a sequence
of odd integers such that
$N_r > 1$, $N_r\to\infty$ and $\sum\frac{1}{N_r^2}=\infty$. Let $\setseq$
be the set of all sequences
$i=(\seqn{i}{r})_{r \geq 1}$ of real numbers with
$1 \leq \seqn{i}{r} < N_{r}$ for all $r$
and $\seqn{i}{r}/N_{r} \rightarrow 0$ as
$r \rightarrow \infty$.

We define a relation $\preceq$ on $\setseq$ by
$$
i \prec j \textrm{ if }(\forall r)(\seqn{i}{r} > \seqn{j}{r}) \textrm{ and }
\seqn{i}{r} / \seqn{j}{r}
\rightarrow \infty \textrm{ as }r \rightarrow \infty
$$
and
$$
i\preceq j
\textrm{ if } i\prec j
\textrm{ or } i=j.
$$
For $i,j\in \setseq$ such that $i\prec j$, we denote by $(i,j)$ the set
$\{k\in \setseq\colon i\prec k\prec j\}$ and by
$[i,j]$ the set
$\{k\in \setseq\colon i\preceq k\preceq j\}$.

Recall that a partially ordered set - or poset - is a pair $(X,\leq)$ where $X$ is a set and $\leq$ is a relation on $X$ such that $x \leq x$ for all $x \in X$, if $x \leq y$ and $y \leq x$ for $x,y \in X$ then necessarily $x = y$ and finally if $x,y,z \in X$ with $x \leq y$ and $y \leq z$ then $x \leq z$.

A chain in a poset $(X,\leq)$ is a subset $C \subseteq X$ such that
for any $x,y \in C$ we have $x \leq y$ or $y \leq x$. We say $(X,
\leq)$ is chain complete if every non-empty chain $C \subseteq X$ has
a least upper bound - or ``supremum'' - in $X$.

We write $x < y$ if $x \leq y$ and $x \neq y$. We call $(X,\leq)$ dense if whenever $x,y \in X$ with $x < y$ we can find $z \in X$ such that $x < z < y$. Finally, recall that an element $x$ of $X$ is minimal if there does not exist $y$ with $y < x$.

The following lemma summarises basic properties of $(\setseq,\preceq)$.

\begin{lemma}
\label{posetproperties}
$(\setseq,\preceq)$ is a non-empty partially ordered set that is chain complete, dense and has no minimal element.
\end{lemma}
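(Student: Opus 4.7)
I would verify the four claimed properties of $(\setseq,\preceq)$ in turn. Non-emptiness is immediate: the constant sequence $\seqn{i}{r}\equiv 1$ lies in $\setseq$. For the partial order axioms, reflexivity is built into the definition of $\preceq$; antisymmetry follows because $i\prec j$ and $j\prec i$ would force $\seqn{i}{r}>\seqn{j}{r}>\seqn{i}{r}$; and for transitivity, $i\prec j\prec k$ gives $\seqn{i}{r}>\seqn{k}{r}$ pointwise together with the identity $\seqn{i}{r}/\seqn{k}{r}=(\seqn{i}{r}/\seqn{j}{r})(\seqn{j}{r}/\seqn{k}{r})\to\infty$ as a product of two divergent sequences.

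For density, given $i\prec j$ I take the coordinatewise geometric mean $\seqn{k}{r}=\sqrt{\seqn{i}{r}\seqn{j}{r}}$; this lies in $\setseq$, and from $\seqn{i}{r}/\seqn{k}{r}=\seqn{k}{r}/\seqn{j}{r}=\sqrt{\seqn{i}{r}/\seqn{j}{r}}\to\infty$ I obtain both $i\prec k$ and $k\prec j$. For the absence of a minimal element, given $i\in\setseq$ I set $\seqn{j}{r}=\sqrt{\seqn{i}{r}N_r}$; the hypothesis $\seqn{i}{r}/N_r\to 0$ forces both $\seqn{j}{r}/N_r=\sqrt{\seqn{i}{r}/N_r}\to 0$ and $\seqn{j}{r}/\seqn{i}{r}=\sqrt{N_r/\seqn{i}{r}}\to\infty$, so $j\in\setseq$ with $j\prec i$.

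The main obstacle is chain completeness. Given a non-empty chain $C\subseteq\setseq$, if $C$ has a maximum element this is the supremum; otherwise my candidate is the pointwise infimum $\seqn{k}{r}:=\inf_{i\in C}\seqn{i}{r}$. I would verify in order: (i) $k\in\setseq$, which follows from $\seqn{k}{r}\geq 1$ and the fact that $\seqn{k}{r}\leq\seqn{i_0}{r}$ for any fixed $i_0\in C$ forces $\seqn{k}{r}/N_r\to 0$; (ii) $k$ is an upper bound, the nontrivial point being $\seqn{i}{r}/\seqn{k}{r}\to\infty$ for each $i\in C$, which I obtain by picking $j\in C$ with $i\prec j$ (possible since $C$ has no maximum) and using $\seqn{k}{r}\leq\seqn{j}{r}$ to conclude $\seqn{i}{r}/\seqn{k}{r}\geq\seqn{i}{r}/\seqn{j}{r}\to\infty$; and (iii) $k$ is least, i.e.\ $k\preceq k'$ for any other upper bound $k'$. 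For (iii) the starting observation is that any upper bound $k'$ satisfies $\seqn{k'}{r}\leq\seqn{i}{r}$ for each $i\in C$, so $\seqn{k'}{r}\leq\seqn{k}{r}$ pointwise. I expect the delicate part of the whole lemma to be extracting the ordering $k\preceq k'$ from this pointwise estimate, since it requires divergence of the ratio $\seqn{k}{r}/\seqn{k'}{r}$; this step will likely rest on again invoking a strictly $\prec$-larger element of $C$ to bound the ratios from below.
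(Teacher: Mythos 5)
Your approach coincides with the paper's at each step: the constant sequence $(1,1,1,\dots)$ for non-emptiness, the coordinatewise geometric mean $\sqrt{\seqn{i}{r}\seqn{j}{r}}$ for density, $\sqrt{\seqn{l}{r}N_r}$ for the absence of a minimal element, and the coordinatewise infimum as the candidate supremum of a chain. Your verifications of the poset axioms, of non-emptiness, density, no minimal element, and of parts (i) and (ii) of chain completeness are all sound.

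The problem is exactly the step you flag as delicate, part (iii). To have $k\preceq k'$ for another upper bound $k'$ one needs the \emph{strict} inequality $\seqn{k}{r}>\seqn{k'}{r}$ for every $r$ together with $\seqn{k}{r}/\seqn{k'}{r}\to\infty$, and neither follows from the pointwise estimate $\seqn{k'}{r}\le\seqn{k}{r}$. The rescue you anticipate --- passing to a $\prec$-larger $j\in C$ --- supplies divergence of $\seqn{j}{r}/\seqn{k'}{r}$, but since $\seqn{k}{r}\le\seqn{j}{r}$ this bounds the wrong ratio and gives nothing for $\seqn{k}{r}/\seqn{k'}{r}$. In fact leastness can fail outright. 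Take $\seqn{i_n}{r}=N_r^{1/2+1/(n+2)}$; one checks this is a chain in $\setseq$ with coordinatewise infimum $k$ given by $\seqn{k}{r}=N_r^{1/2}$. Then $k''=(N_1^{1/2},1,1,\dots)$ also lies in $\setseq$ and is an upper bound of the chain, yet $\seqn{k}{1}=\seqn{k''}{1}$ rules out $k\prec k''$, so $k\not\preceq k''$. Since every upper bound is coordinatewise $\le k$, the element $k$ is the only possible least upper bound, and it is not one; this chain has no supremum. So the gap you identify cannot be closed in the way you suggest, and the paper's own one-sentence argument silently assumes leastness and has the same gap. (The paper's later \emph{use} of the lemma, in the proof of Theorem~\ref{thincr}, survives because the particular chain $\{i_n\}$ there is wedged above a nested sequence $j_m$, which gives $\seqn{k}{r}\ge\seqn{j_{m+1}}{r}$ and hence $k\prec j_m$ directly, with no appeal to leastness for an arbitrary chain.)
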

\begin{proof}
It is readily verified that $(\setseq,\preceq)$ is a poset and that $\setseq \neq \emptyset$ since it contains the element $(1,1,1,\dots)$. Given a non-empty chain $C = \{i_{\alpha}\mid\alpha \in A\}$
in $\setseq$, the supremum of $C$ exists and is given by $i\in \setseq$ where
$\seqn{i}{r}  = \inf_{\alpha \in A}\seqn{i_\alpha}{r}$; hence $(\setseq,\preceq)$ is chain complete. To see that $(\setseq,\preceq)$ is dense, note that if $i,j \in \setseq$ with $i \prec j$ then $i \prec k \prec j$ where $k \in \setseq$ is given by $\seqn{k}{r} = \sqrt{\seqn{i}{r}\seqn{j}{r}}$. Finally given $l \in \setseq$, we can find $m \in \setseq$ with $m \prec l$ by taking $\seqn{m}{r} = \sqrt{\seqn{l}{r}N_{r}}$. Therefore $(\setseq,\preceq)$ has no minimal element.
This completes the proof of the lemma.
\end{proof}

We begin by working in the plane $\real^2$.

Denote the inner product $\langle , \rangle$ and the Euclidean norm $\|\cdot\|$. Write $B(x,\delta)$ for an open ball in $(\real^2,\|\cdot\|)$ with centre $x \in \real^2$ and radius $\delta > 0$. Further let $B_{\infty}(c,d/2)$ be an open ball in $(\real^2,\| \cdot \| _{\infty})$, i.e. an open square with centre $c \in \real^2$ and side $d > 0$. Finally, given $x,y \in \real^2$ we use $[x,y]$ to denote the closed line segment
\begin{equation*}
\{(1-\lambda)x + \lambda y \mid 0 \leq \lambda \leq 1\} \subseteq \real^2.
\end{equation*}

Let $d_0=1$. For each $r \geq 1$ set $d_r = \frac{1}{N_1N_2\dots N_{r}}$ and define the lattice $C_r \subseteq \real^2$:
\begin{equation}\label{deflat}
C_r=d_{r-1}\left(\left(\frac{1}{2},\frac{1}{2}\right)+\Z^2\right).
\end{equation}

Suppose now $i \in \setseq$. Define the set $\sset_i \subseteq \real^{2}$ by
\begin{equation}\label{defsseti}
\sset_i
=\real^2\setminus
\bigcup_{r=1}^\infty
\bigcup_{c\in C_r}
B_{\infty}\left(c,\frac{1}{2}\seqn{i}{r}d_r\right).
\end{equation}

Note that each $\sset_i$ is a closed subset of the plane and
$\sset_i\subseteq\sset_j$ if $i\preceq j$.
From $\seqn{i}{r} < N_r$ we see that $\sset_i \neq \emptyset$ - for example $(0,0) \in \sset_i$. We now claim that the Lebesgue measure of $\sset_i$ is equal to $0$.

For each $r \geq 0$ we define sets $\Del_r$ and $\F_r$ of disjoint open squares of side $d_r$ as follows.
Recall $d_0 = 1$. Let $\Del_0$ be the empty-set and $\F_0 = \{U\}$ be a singleton
comprising the open unit square:
$$
U=\{(x,y) \in \real^2 \mid 0 < x,y < 1\}.
$$

Divide each square in the set $\F_{r-1}$ into an $N_r\times N_r$ grid.
Let $\Del_r$ comprise the central open squares of the grids
and let $\F_r$ comprise all the remaining open squares.
By induction each square in $\Del_r$ and $\F_r$ has side $d_r$ and
the centres of the squares in $\Del_r$ belong to the
lattice $C_r$.
For each $m \geq 1$ we have from
\eqref{defsseti} and $\seqn{i}{r} \geq 1$,
\begin{equation*}
\sset_i \subseteq
\real^2\setminus
\bigcup_{r=1}^m \bigcup_{c\in C_r} B_{\infty}\left(c,\frac{1}{2}d_r\right)
\end{equation*}
so that
\begin{equation*}
\sset_i \cap {U}
\subseteq \overline{U} \setminus \bigcup_{r=1}^m \bigcup \Del_r
= \overline{\bigcup \F_m},
\end{equation*}
and, as the cardinality of the set $\F_m$ is equal to $(N_1^2-1)\dots(N_m^2-1)$ and each square in $\F_m$ has area $d_m^{2}$,
we can estimate the Lebesgue measure of $\sset_i \cap {U}$:
\begin{equation*}
|\sset_i \cap {U}|
\leq \left(1-\frac{1}{N_1^2}\right) \dots \left(1-\frac{1}{N_m^2}\right).
\end{equation*}
This tends to $0$ as $m \rightarrow \infty$, because
$\sum \frac{1}{N_r^2} = \infty$. Therefore the Lebesgue measure
$|\sset_i \cap {U}| = 0$.
Furthermore, from \eqref{deflat} and \eqref{defsseti}, $\sset_i$ is invariant under translations by the lattice $\mathbb{Z}^{2}$. Hence $|\sset_i| = 0$ for every $i \in \setseq$.

Let
$$
\sset
=
\bigcup_{\overset{i\in\setseq}{i\prec(1,1,1,\dots)}} \sset_i.
$$
As $(1,1,1,\dots)$ is not minimal and $\sset_i \neq \emptyset$ for any $i \in \setseq$, we observe $\sset$ is not empty.
The following theorem
now proves that for any point
$x\in \sset$ there are line segments inside $\sset$
with directions that cover a dense subset of the unit circle. We say $e
= (e_{1},e_{2}) \in S^{1}$ has rational slope if there exists
$(p,q)\in \Z^2 \setminus \{(0,0)\}$ with $pe_{1} = qe_{2}$.

\begin{theorem}
\label{lemmain}
For any $i,j \in\setseq$ with $i \prec j$, $\e > 0$ and
$e\in S^1$ with rational slope there exists
$\delta_0=\delta_0(i,j,\e,e)>0$ such that whenever $x \in \sset_{i}$ and
$\delta \in (0,\delta_0)$,
there is a line segment $[x',x'+\delta e] \subseteq \sset_{j}$ where
$\|x'-x\| \leq \e \delta$.
\end{theorem}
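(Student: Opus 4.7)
The plan is to work in coordinates adapted to the direction $e$. Writing $e=(p,q)/n$ with $\gcd(p,q)=1$ and $n=\sqrt{p^2+q^2}$, let $f=(-q,p)/n$ be the orthogonal unit vector; a line in direction $e$ is then determined by its perpendicular offset $s=\langle\cdot,f\rangle$. The key geometric observation is that the line at offset $s$ meets the open square $B_\infty(c,\frac{1}{2}j^{(r)}d_r)$ for some $c\in C_r$ if and only if $|s-\langle c,f\rangle|<\frac{1}{2}\alpha j^{(r)}d_r$, where $\alpha=(|p|+|q|)/n$. Because $\gcd(p,q)=1$, the set $\{\langle c,f\rangle:c\in C_r\}$ is an arithmetic progression of spacing $d_{r-1}/n$, so the ``bad offsets'' at level $r$ form a periodic union of intervals of width $\alpha j^{(r)}d_r$ and period $d_{r-1}/n$.

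The first step is a scale separation. Let $R=R(\delta)$ be the largest integer such that $(i^{(r)}-j^{(r)})d_r\geq 2(1+\e)\delta$ for every $r\leq R$. Since $i\prec j$ forces $i^{(r)}-j^{(r)}\to\infty$, we have $R(\delta)\to\infty$ as $\delta\to 0$. For any $r\leq R$ and any $x'$ with $\|x'-x\|\leq\e\delta$, every point of $[x',x'+\delta e]$ lies within Euclidean distance $(1+\e)\delta$ of $x$, hence at $\infty$-distance at least $\frac{1}{2}i^{(r)}d_r-(1+\e)\delta\geq\frac{1}{2}j^{(r)}d_r$ from every $c\in C_r$. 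So for $r\leq R$ the segment automatically avoids all level-$r$ obstructions of $\sset_j$, whatever $x'$ we pick.

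For the remaining levels $r>R$, I would perturb $x$ only in the $f$-direction, writing $x'=x+\xi f$ with $|\xi|\leq\e\delta$. The problem reduces to choosing a value $s=\langle x,f\rangle+\xi$ missing every level-$r$ bad interval for $r>R$. I build $s$ iteratively: starting from $s_R=\langle x,f\rangle$, at each step $r>R$ I nudge $s$ just enough to escape the level-$r$ bad interval while preserving a margin $m_r$ against the levels $R<r'<r$ already handled. The margin requirement yields the backward recursion $m_{r-1}\geq\frac{1}{2}\alpha j^{(r)}d_r+2m_r$, which — thanks to $d_{r+1}/d_r=1/N_{r+1}\leq 1/3$ and $j^{(r)}d_r\leq d_{r-1}$ — has a geometric-sum solution with $m_R=O(d_R)$. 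Summing the increments $s_r-s_{r-1}$ gives a limit $s_\infty$ with $|s_\infty-s_R|\leq C_e\,d_R$ for some constant $C_e=C_e(e)$; setting $x'=x+(s_\infty-s_R)f$ then produces a line through $x'$ in direction $e$ that is good at every level $r>R$, so the segment $[x',x'+\delta e]$ lies in $\sset_j$.

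The main obstacle is calibrating $\delta_0=\delta_0(i,j,\e,e)$ so that the two ranges mesh at a single cutoff $R$: the auto-handling condition gives the lower bound $d_R\geq 2(1+\e)\delta/(i^{(R)}-j^{(R)})$, while the iterative shift demands the upper bound $d_R\leq \e\delta/C_e$, and reconciling them requires $i^{(R)}-j^{(R)}\geq 2C_e(1+\e)/\e$. This last inequality holds for all $R$ beyond a fixed index $R^*=R^*(i,j,\e,e)$ by the hypothesis $i^{(r)}/j^{(r)}\to\infty$, and choosing $\delta_0$ small enough forces $R(\delta)\geq R^*$ so that both bounds on $d_R$ become simultaneously satisfiable. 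The remaining technical check — that at each step the allowed shift window of length $2\xi_r$ really intersects the complement of the level-$r$ ``bad plus margin'' set — is routine, exploiting that the ratio of bad-interval width to period, $n\alpha j^{(r)}/N_r$, tends to zero.
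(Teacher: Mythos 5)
Your approach---adapted coordinates along $e$, the arithmetic-progression structure of the offsets $\langle c,f\rangle$, scale separation into auto-handled large scales $r\le R$ and explicitly navigated small scales $r>R$, and an iterative escape---is recognisably the same strategy as the paper's, which phrases the small-scale part as a nested-interval construction rather than an escape iteration. However, the calibration step has a genuine gap.

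The problem is the shift bound $|s_\infty-s_R|\le C_e\,d_R$, which is too weak to close the argument. With $R=R(\delta)$ maximal subject to $(\seqn{i}{r}-\seqn{j}{r})d_r\ge 2(1+\e)\delta$ for all $r\le R$, maximality gives $(\seqn{i}{R+1}-\seqn{j}{R+1})d_{R+1}<2(1+\e)\delta$, and since $d_R=N_{R+1}d_{R+1}$, for $\delta$ near the lower end of the range producing this particular $R$ one has $d_R$ of order $N_{R+1}\delta/(\seqn{i}{R+1}-\seqn{j}{R+1})$. Demanding $d_R\le\e\delta/C_e$ would then force $(\seqn{i}{R+1}-\seqn{j}{R+1})/N_{R+1}$ to be bounded below, which is impossible because $\seqn{i}{r}/N_r\to 0$. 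The condition $\seqn{i}{R}-\seqn{j}{R}\ge 2C_e(1+\e)/\e$ you invoke only says that the interval $[2(1+\e)\delta/(\seqn{i}{R}-\seqn{j}{R}),\,\e\delta/C_e]$ is non-empty; it does not place the actual value $d_R$ inside it, and choosing a smaller $R$ runs into the same obstruction at the level where $d_R$ first drops below $\e\delta/C_e$.

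The repair is to observe that the total shift is in fact $O(\seqn{j}{R+1}d_{R+1})$, not merely $O(d_R)$: in your geometric sum $\sum_{s>R}2^{s-R-1}\seqn{j}{s}d_s$ the ratio of consecutive terms is $2\seqn{j}{s+1}/(\seqn{j}{s}N_{s+1})\to 0$, so the leading term dominates; the looser estimate $\le C_e d_R$ comes only from the crude bound $\seqn{j}{r}d_r\le d_{r-1}$. With $|s_\infty-s_R|\le C_e'\,\seqn{j}{R+1}d_{R+1}$, the calibration requires $\seqn{j}{R+1}/(\seqn{i}{R+1}-\seqn{j}{R+1})\le\e/(2(1+\e)C_e')$, which does hold for all large $R$ since $\seqn{i}{r}/\seqn{j}{r}\to\infty$. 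This is precisely the quantitative fact the paper exploits: its shift $\lambda$ lies in $[0,4\seqn{j}{k+1}d_{k+1}]$ and the ratio $\seqn{j}{m}/\seqn{i}{m}$ is made small via \eqref{defm0}. Once the shift bound is tightened in this way, the rest of your argument goes through and amounts to the paper's nested-interval construction in a different guise.
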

\begin{proof}
First we note that without loss of generality we may assume that
$\e \leq 1$ and
$|e_2|\le |e_1|$ where $e = (e_{1},e_{2})$. Write $e_2/e_1=p/q$ with $p,q \in \mathbb{Z}$ and $q > 0$. Now observe that if $y \in \real^2$ then the line $y + \real e$ has gradient $p/q \in [-1,1]$ and if it
intersects the square
$B_{\infty}(c,d/2)$,
\begin{equation}
\label{intersectioncriterion}
\left|(y_2-c_2)-\frac{p}{q}\left(y_1-c_1\right)\right|< d
\end{equation}
where $y = (y_1,y_2)$ and $c = (c_1,c_2)$.

From $i\prec j$, we have $\sup_m\frac{\seqn{j}{m}}{\seqn{i}{m}}<1$ so that we can find
$\psi>0$ such that
$\frac{\seqn{j}{m}}{\seqn{i}{m}}\le1-\psi$ for all $m$.
Put $\rho_m=\seqn{i}{m} d_m \psi/4$.
Since $d_m=N_{m+1}d_{m+1}$ and $\seqn{i}{m} \geq 1$ for each $m\geq 1$,
$$
\rho_m/\rho_{m+1}
=(\seqn{i}{m}N_{m+1})/\seqn{i}{m+1}
\geq
\inf_m
\frac{N_{m+1}}{\seqn{i}{m+1}}
> 1
$$
so that $\rho_m \searrow 0$.
Let $k_0$ be such that
\begin{equation}
\label{defm0}
\begin{cases}
{\seqn{j}{m}}/{\seqn{i}{m}} \leq \e\psi/16\\
{\seqn{j}{m}}/{N_m}\leq(5q)^{-1}
\end{cases}
\textrm{ for all }
m\ge k_0.
\end{equation}
We set $\delta_0=\rho_{k_0}$ and let $\delta\in(0, \delta_0)$.
Since $\rho_k\to0$, there exists $k\ge k_0$ such that
$\rho_k\ge\delta>\rho_{k+1}$.

Let $C_m$ be given by \eqref{deflat} and set
\begin{equation*}
T_m = \bigcup_{c \in C_m} B_\infty(c,\seqn{j}{m} d_m/2)
\end{equation*}
so that $\sset_j = \bigcap_{m \geq 1} (\real^2\setminus T_m)$.

Fix any point $x \in \sset_i$. Define the line $\ell_\lambda = x+(0,\lambda) + \real e \subseteq \real^2$ to be the vertical shift of $x + \real e$ by $\lambda$. We claim that if $m \geq k+1$ and $I \subseteq \real$ is a closed interval of length at least $4\seqn{j}{m}d_m$ we can find a closed subinterval $I' \subseteq I$ of length $\seqn{j}{m}d_m$ such that the line $\ell_{\lambda}$ does not intersect $T_m$ for any $\lambda \in I'$.

Take $I = [a,b]$. We may assume there exists $\lambda \in [a,a+\seqn{j}{m}d_m]$ such that $\ell_\lambda$ intersects $B_\infty(c,\seqn{j}{m}d_m/2)$ for some $c \in C_m$; if not we can take $I' = [a,a+\seqn{j}{m}d_m]$. Write $c = (c_1,c_2)$ and $x = (x_1,x_2)$. Note that from \eqref{intersectioncriterion} we have
\begin{equation*}
\left|(x_2+\lambda-c_2)-\frac{p}{q}\left(x_1-c_1\right)\right|< \seqn{j}{m}d_m.
\end{equation*}
Let $I' = [\lambda+2\seqn{j}{m}d_m,\lambda+3\seqn{j}{m}d_m] \subseteq I$.
Suppose that $\lambda' \in I'$ and that $c' \in C_m$. We may write $c' = (c_1',c_2') = (c_1,c_2)+(l_1,l_2)d_{m-1}$ where $l_1,l_2 \in \mathbb{Z}$. Then if $pl_1 \neq ql_2$,
\begin{align*}
\Bigl|(x_2&+\lambda'-c_2')-\frac{p}{q}(x_1-c_1')\Bigr|\\
&\geq
d_{m-1}\left|\frac{pl_1-ql_2}{q}\right|
-\left|(x_2+\lambda-c_2)-\frac{p}{q}(x_1-c_1)\right| - |\lambda'-\lambda| > \seqn{j}{m}d_m
\end{align*}
as $|pl_1-ql_2| \geq 1$ and $d_{m-1} = N_m d_m \geq 5q \seqn{j}{m} d_m$ from \eqref{defm0}.
On the other hand if $pl_1 = ql_2$ the same inequality holds as
\begin{align*}
\Bigl|(x_2+\lambda'-c_2')-&\frac{p}{q}\left(x_1-c_1'\right)\Bigr|\\
&\geq
|\lambda'-\lambda| -
\Bigl| (x_2+\lambda-c_2)-
\frac{p}{q}\left(x_1-c_1\right) \Bigr|
>
\seqn{j}{m}d_m.
\end{align*}
Therefore by \eqref{intersectioncriterion} the line $\ell_{\lambda'}$ does not intersect $B_\infty(c',\seqn{j}{m}d_m/2)$ for any $c' \in C_m$ and any $\lambda' \in I'$. Hence the claim.

Note that for $m \geq k+1$ we have $\seqn{j}{m}d_{m} \geq 4\seqn{j}{m+1}d_{m+1}$ from \eqref{defm0}. Subsequently, by the previous claim, we may construct a nested sequence of closed intervals
\begin{equation*}
[0,4\seqn{j}{k+1}d_{k+1}] \supseteq I_{k+1} \supseteq I_{k+2} \supseteq \dots
\end{equation*}
such that $|I_m| = \seqn{j}{m}d_m$ and $\ell_\lambda$ does not intersect $T_m$ for $\lambda \in I_m$.

Picking $\lambda \in \bigcap_{m \geq k+1} I_m$ we have
\begin{equation*}
0 \leq \lambda \leq 4\seqn{j}{k+1}d_{k+1} \leq \frac{\seqn{i}{k+1}\psi \e}{4}d_{k+1} = \e \rho_{k+1} < \e \delta
\end{equation*}
using \eqref{defm0} again.

Set $x' = x+(0,\lambda)$ so that $\|x'-x\| = \lambda < \e \delta$. Note that $[x',x'+\delta e]$ does not intersect $T_m$ for $m \geq k+1$ as $[x',x'+\delta e] \subseteq \ell_{\lambda}$ and $\lambda \in I_m$. Now suppose $m \leq k$. From $\e \leq 1$ we have $\lambda \leq \delta \leq \rho_k$.
If $c \in C_m$ then we observe that $[x',x'+\delta e]$ does not
intersect $B_\infty(c,\seqn{j}{m}d_m/2)$ as $x\in \sset_{i}$ is
outside
$B_\infty(c,\seqn{i}{m}d_m/2)$ and
\begin{align*}
\lambda+\delta
\leq 2\rho_k
\le
2\rho_m
&=
\frac{1}{2} \seqn{i}{m}d_m\psi
\leq
\frac{1}{2}
\seqn{i}{m}d_m\left(1-\frac{\seqn{j}{m}}{\seqn{i}{m}}\right)\\
&=
\frac{1}{2}(\seqn{i}{m}d_m-\seqn{j}{m}d_m).
\end{align*}
Therefore $[x',x'+\delta e]$ does not intersect $T_m$ for any $m \geq 1$ so that $[x',x'+\delta e] \subseteq \sset_j$. This finishes the proof.
\end{proof}

We now give a simple geometric lemma and then
prove some corollaries to Theorem~\ref{lemmain}.
Given $e = (e^{1},e^{2}) \in S^{1}$ we define $e^{\perp} = (-e^2,e^1)$ so that $\langle e^{\perp},e \rangle = 0$ for any $e \in S^1$ and, given $x_0 \in \real^2$ and $e_0 \in S^1$, then $x \in \real^2$ lies on the line $x_0 + \real e_0$ if and only if $\langle e_0^{\perp},x \rangle = \langle e_0^{\perp},x_0 \rangle$.
\begin{lemma}\label{lemcross}
Suppose that $x_{1},x_{2} \in \real^{2}$, $e_{1},e_{2} \in S^{1}$,
$\alpha_{1},\alpha_{2} > 0$, the line segments $l_{1}$,
$l_{2}$ given by $l_{m} = [x_{m},x_{m}+\alpha_{m}e_{m}]$ intersect at
$x_{3} \in \real^{2}$ and that
\begin{equation}\label{crosspoint}
[x_{3}-\alpha e_{m},x_{3}+\alpha e_{m}] \subseteq l_{m},
\qquad
(m = 1,2)
\end{equation}
where $\alpha > 0$.
If $x_{1}'$, $x_{2}' \in \real^{2}$ and $e_{1}'$,
$e_{2}' \in S^{1}$
are such that
\begin{align}
\label{firstineq}\|x_{m}'-x_{m}\| &
\leq \frac{\alpha}{16} |\langle e_2^{\perp},e_1 \rangle|
\qquad\textrm{and}\\
\label{secondineq}\|e_{m}'-e_{m}\| &
\leq \frac{\alpha}{8(\alpha_{1}+\alpha_{2})}|\langle e_2^{\perp},e_1
\rangle|
\end{align}
for $m = 1,2$, then the line segments $l_{1}'$, $l_{2}'$ given by
$l_{m}' = [x_{m}',x_{m}'+\alpha_{m}e_{m}']$ intersect at a point
$x_{3}'\in \real^{2}$ with $\|x_{3}'-x_{3}\| \leq \alpha$.
\end{lemma}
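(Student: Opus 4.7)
The plan is to set up coordinates adapted to the two directions and exploit the fact that $\sigma := |\langle e_2^{\perp},e_1\rangle|$ is precisely $|\sin\theta|$, where $\theta$ is the angle between $e_1$ and $e_2$; this is the natural "transversality" quantity controlling intersections of the two lines. The strategy is to parametrise the intersection by writing $x_3 = x_1 + t_0 e_1 = x_2 + s_0 e_2$, derive a closed formula for the parameters by taking inner products against perpendiculars, and then run exactly the same computation for the perturbed lines $x_m' + \mathbb{R}e_m'$, comparing the two.

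First I would extract the basic sizes. From condition \eqref{crosspoint}, both $x_3 - \alpha e_m$ and $x_3 + \alpha e_m$ lie on $l_m = [x_m, x_m+\alpha_m e_m]$, so $t_0 \in [\alpha,\alpha_1-\alpha]$ and $s_0\in[\alpha,\alpha_2-\alpha]$; in particular $2\alpha \leq \alpha_m$, hence $\alpha \leq (\alpha_1+\alpha_2)/4$. Taking the inner product of $x_3 = x_1 + t_0 e_1 = x_2 + s_0 e_2$ with $e_2^{\perp}$ kills the $s_0 e_2$ term and yields $t_0\,\langle e_2^{\perp},e_1\rangle = \langle e_2^{\perp}, x_2 - x_1\rangle$, and symmetrically for $s_0$.

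Next I would show that the perturbed lines are still transverse. By the triangle inequality,
\[
\bigl|\langle (e_2')^{\perp},e_1'\rangle - \langle e_2^{\perp},e_1\rangle\bigr|
\leq \|e_2'-e_2\| + \|e_1'-e_1\|
\leq \frac{\sigma\alpha}{4(\alpha_1+\alpha_2)}
\leq \frac{\sigma}{4},
\]
using $\alpha \leq (\alpha_1+\alpha_2)/4$ in the last step. Hence $|\langle (e_2')^{\perp},e_1'\rangle| \geq 3\sigma/4 > 0$, so the perturbed lines meet in a unique point, which can be written as $x_3' = x_1' + t e_1' = x_2' + s e_2'$ for unique $t,s \in \mathbb{R}$.

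The main quantitative step is to subtract the two identities $t e_1' - s e_2' = x_2'-x_1'$ and $t_0 e_1 - s_0 e_2 = x_2 - x_1$, rewriting the difference as
\[
(t-t_0)e_1' - (s-s_0)e_2' = (x_2'-x_2) - (x_1'-x_1) - t_0(e_1'-e_1) + s_0(e_2'-e_2) =: v.
\]
Using \eqref{firstineq}, \eqref{secondineq} and the bounds $t_0 \leq \alpha_1$, $s_0 \leq \alpha_2$, I estimate $\|v\| \leq \sigma\alpha/8 + \sigma\alpha/8 = \sigma\alpha/4$. Taking the inner product with $(e_2')^{\perp}$ then gives $|t-t_0| \leq \|v\|/|\langle (e_2')^{\perp},e_1'\rangle| \leq \alpha/3$, and similarly $|s-s_0|\leq \alpha/3$. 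Since $t_0\in[\alpha,\alpha_1-\alpha]$, we get $t\in[0,\alpha_1]$, so $x_3'\in l_1'$; likewise $x_3'\in l_2'$. Finally, writing $x_3'-x_3 = (x_1'-x_1) + t_0(e_1'-e_1) + (t-t_0)e_1'$ and applying the bounds gives $\|x_3'-x_3\| \leq 3\sigma\alpha/16 + \alpha/3 < \alpha$, completing the proof.

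The argument is essentially just Cramer's rule with careful bookkeeping; the only subtlety is that the various constants $1/16$, $1/8$, the factor $\alpha_1+\alpha_2$ in \eqref{secondineq}, and the use of \eqref{crosspoint} to bound $\alpha$ in terms of $\alpha_1+\alpha_2$, must be juggled so that the perturbation of the denominator $\langle e_2^{\perp},e_1\rangle$ is controlled by a fixed fraction of itself while the perturbation of the numerator is controlled by $\sigma\alpha$. There is no deep obstacle; the main task is to lay out the inequalities in the correct order.
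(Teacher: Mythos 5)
Your proof is correct and uses essentially the same idea as the paper, namely controlling the intersection via the orthogonal projection $\langle e_2^\perp,\cdot\rangle$ (resp.\ $\langle (e_2')^\perp,\cdot\rangle$), with $|\langle e_2^\perp,e_1\rangle|$ as the transversality parameter and with the bound $\alpha\le(\alpha_1+\alpha_2)/4$ extracted from \eqref{crosspoint}. The only small difference is that the paper locates $x_3'$ by an intermediate-value argument on a short subsegment of $l_1'$, whereas you solve the $2\times2$ linear system explicitly and bound the parameter change $|t-t_0|$ by lower-bounding the perturbed determinant $|\langle(e_2')^{\perp},e_1'\rangle|$ --- a purely cosmetic distinction leading to the same constants.
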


\begin{proof}
As $\langle e_2^{\perp},e_1 \rangle = -\langle e_1^{\perp},e_2
\rangle$ we may assume, without loss of generality, that
the inner product
$\langle e_2^{\perp},e_1 \rangle$
is non-negative.
From \eqref{crosspoint}
we can write $x_{3}=x_{m}+\lambda_{m}e_{m}$ for $m = 1,2$ with
$\alpha\leq \lambda_{m} \leq \alpha_{m}-\alpha$.
Now note that as $x_1+\lambda_1 e_1 \in l_2$ we have
$$
\langle e_2^{\perp},x_1+\lambda_1 e_1 \rangle = \langle
e_2^{\perp},x_2 \rangle
$$
so that
\begin{equation}\label{equality}
\langle e_2^{\perp},x_{1}+(\lambda_{1}+\pi \frac{1}{2}\alpha)e_{1} \rangle - \langle e_2^{\perp},x_2 \rangle = \pi\frac{\alpha}{2} \langle e_2^{\perp},e_1 \rangle
\end{equation}
for $\pi = \pm 1$. Using \eqref{firstineq} and \eqref{secondineq} we quickly obtain from \eqref{equality}
\begin{align}\label{eq.pos}
&\langle e_2'^{\perp},x_{1}'+(\lambda_{1}+\frac{1}{2}\alpha)e_{1}'
  \rangle - \langle e_2'^{\perp},x_2' \rangle \geq 0\\
\label{eq.neg}
\text{and }&\langle e_2'^{\perp},x_{1}'+(\lambda_{1}-\frac{1}{2}\alpha)e_{1}' \rangle - \langle e_2'^{\perp},x_2' \rangle \leq 0.
\end{align}
Indeed, for $\pi=\pm1$,
\begin{align*}
\Bigl(\langle &e_2'^{\perp},x_{1}'+(\lambda_{1}+\pi\frac{1}{2}\alpha)e_{1}' \rangle - \langle e_2'^{\perp},x_2' \rangle\Bigr)
-
\Bigl(
\langle e_2^{\perp},x_{1}+(\lambda_{1}+\pi \frac{1}{2}\alpha)e_{1}
\rangle - \langle e_2^{\perp},x_2 \rangle \Bigr)\\
=
\langle &e_2'^{\perp},(x_1'-x_1)-(x_2'-x_2)+
(\lambda_{1}+\pi \frac{1}{2}\alpha)(e_1'-e_1)\rangle\\
&+
\langle (e_2'^\perp-e_2^\perp),(x_1-x_2+(\lambda_{1}+\pi \frac{1}{2}\alpha)e_1\rangle;
\end{align*}
the norm of the first term  is bounded by
\begin{align*}
\|x_{1}'-x_{1}\| + &\|x_{2}'-x_{2}\| + |\lambda_{1}+\pi\frac{1}{2}\alpha|\cdot
  \|e_{1}'-e_{1}\| \\
&\leq 2\frac{\alpha}{16}
\langle e_{2}^\perp, e_{1}\rangle +
  \alpha_{1}\frac{\alpha}{8(\alpha_{1}+\alpha_{2})}
\langle e_{2}^\perp, e_{1}\rangle
 \leq \frac{\alpha}{4}\langle e_{2}^\perp, e_{1}\rangle,
\end{align*}
and the norm of the second term is bounded by
\begin{multline*}
\|e_{2}'-e_{2}\| (\|x_{1}-x_{2}\| + |\lambda_{1}+\pi\frac{1}{2}\alpha|)
\leq \frac{\alpha}{8(\alpha_{1}+\alpha_{2})}\langle e_{2}^\perp, e_{1}\rangle
((\alpha_{1}+\alpha_{2})+\alpha_{1})\\
\leq \frac{\alpha}{4}\langle e_{2}^\perp, e_{1}\rangle.
\end{multline*}

Hence by \eqref{eq.pos} and \eqref{eq.neg} there exists
\begin{equation}\label{defx'}
x_{3}' \in
[x_{1}'+(\lambda_{1}-\frac{1}{2}\alpha)e_{1}',
x_{1}' +(\lambda_{1}+\frac{1}{2}\alpha) e_{1}'] \subseteq l_{1}'
\end{equation}
with $\langle e_{2}'^\perp, x_{3}'\rangle = \langle e_{2}'^\perp, x_{2}'\rangle$
so that we can write
\begin{equation}\label{deflambda2'}
x_{3}' = x_{2}'+\lambda_{2}'e_{2}'
\end{equation}
for some
$\lambda_{2}' \in \real$. Since $x_{3} = x_{1}+\lambda_{1}e_{1}$
and \eqref{defx'} imply
\begin{equation*}
\|x_{3}'-x_{3}\| \leq \|x_{1}'-x_{1}\| + \lambda_{1} \|e_{1}'-e_{1}\|
+ \frac{1}{2}\alpha \|e_{1}'\| \leq \frac{3}{4}\alpha
\end{equation*}
and $x_{3} = x_{2}+\lambda_{2}e_{2}$  and \eqref{deflambda2'} imply
\begin{equation*}
\|x_{3}'-x_{3}\|
\geq |\lambda_{2}'-\lambda_{2}| - \|x_{2}'-x_{2}\| - \lambda_{2}\|e_{2}'-e_{2}\|\geq |\lambda_{2}'-\lambda_{2}| - \frac{1}{4}\alpha,
\end{equation*}
we get
\begin{equation*}
|\lambda_{2}'-\lambda_{2}|
\leq
\frac{3}{4}\alpha + \frac{1}{4}\alpha =
\alpha.
\end{equation*}
It follows that
\begin{equation*}
x_{3}' \in [x_{2}'+(\lambda_{2}-\alpha) e_{2}',x_{2}'+
(\lambda_{2}+\alpha)e_{2}'] \subseteq l_{2}'
\end{equation*}
since $\alpha \leq \lambda_{2} \leq \alpha_{2}-\alpha$. Therefore
$x_{3}'\in l_{1}' \cap l_{2}'$ with
$\|x_{3}'-x_{3}\| \leq \frac{3}{4}\alpha < \alpha$ as required.
\end{proof}

\begin{corollary}\label{corint}
Suppose $i,j \in\setseq$ with $i \prec j$ and $\e > 0$.
\begin{enumerate}
\item
There exists
$\delta_1=\delta_1(i,j,\e) > 0$ such that whenever
$\delta\in(0, \delta_{1})$,
$x \in \sset_{i}$ and $e \in S^{1}$,
there exists a line segment $[x',x'+\delta e'] \subseteq \sset_{j}$
where $x' \in \real^{2}$, $e' \in S^{1}$ with
$\|x'-x\| \leq \e\delta$ and $\|e'-e\| \leq \e$.
\item\label{corint-2}
There exists $\delta_{2} = \delta_{2}(i,j,\e) > 0$ such that
whenever $\delta \in(0, \delta_{2})$, $x \in \sset_{i}$, $u \in
B(x,\delta)$ and $e \in S^{1}$ there exists a line segment $[u',u'+\delta
  e'] \subseteq \sset_{j}$ where $u' \in \real^{2}$, $e' \in S^{1}$
with $\|u'-u\| \leq \e \delta$ and $\|e'-e\| \leq \e$.
\item\label{corint-3}
For $v_{1},v_{2},v_{3} \in \real^{2}$ there exists
$\delta_3 = \delta_3(i,j,\e,v_{1},v_{2},v_{3}) > 0$
such that
whenever $\delta\in(0, \delta_{3})$ and
$x\in\sset_i$
there exist  $v_{1}',v_{2}',v_{3}'\in\real^2$
such that $\|v_{m}'-v_{m}\| \leq \e$ and
\begin{equation*}
[x+\delta  v_{1}',x+\delta v_{3}'] \cup [x+\delta v_{3}',x+\delta v_{2}'] \subseteq \sset_{j}.
\end{equation*}
\item\label{corint-4}
There exists $\delta_4 = \delta_4(i,j,\e) > 0$ such that whenever $\delta \in (0,\delta_4)$, $v_1,v_2,v_3$ are in the closed unit ball $D^2$ of $\real^2$ and $x \in \sset_i$ there exist $v_1',v_2',v_3' \in \real^2$ such that $\|v_m'-v_m\| \leq \e$ and
\begin{equation*}
[x+\delta  v_{1}',x+\delta v_{3}'] \cup [x+\delta v_{3}',x+\delta v_{2}'] \subseteq \sset_{j}.
\end{equation*}
\end{enumerate}
\end{corollary}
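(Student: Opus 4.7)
My plan is to deduce part (4) from part (3) by a finite covering argument, exploiting the compactness of the closed unit disk $D^{2}\subseteq\real^{2}$. The key observation is that in part (3) the quantity $\delta_{3}$ is allowed to depend on the fixed triple $(v_{1},v_{2},v_{3})$, whereas in (4) we need a single $\delta_{4}$ that works uniformly for all triples in $D^{2}$; compactness is exactly what bridges this gap.

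First I would fix a finite $\tfrac{\e}{2}$-net $\{w_{1},\dots,w_{N}\}\subseteq D^{2}$, so that for every $v\in D^{2}$ there exists some $k$ with $\|v-w_{k}\|\le\e/2$. There are only $N^{3}$ triples $(w_{k_{1}},w_{k_{2}},w_{k_{3}})$ that can be formed from this net. For each such triple, part (3) (applied with parameter $\e/2$ in place of $\e$) supplies a positive quantity
\[
\delta_{3}\bigl(i,j,\tfrac{\e}{2},w_{k_{1}},w_{k_{2}},w_{k_{3}}\bigr).
\]
I would then define
\[
\delta_{4}(i,j,\e)
=\min_{1\le k_{1},k_{2},k_{3}\le N}
\delta_{3}\bigl(i,j,\tfrac{\e}{2},w_{k_{1}},w_{k_{2}},w_{k_{3}}\bigr),
\]
which is positive since it is the minimum of finitely many positive numbers.

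Now suppose $\delta\in(0,\delta_{4})$, $x\in\sset_{i}$ and $v_{1},v_{2},v_{3}\in D^{2}$ are given. I would pick indices $k_{1},k_{2},k_{3}$ with $\|v_{m}-w_{k_{m}}\|\le\e/2$ for $m=1,2,3$. Since $\delta<\delta_{3}(i,j,\e/2,w_{k_{1}},w_{k_{2}},w_{k_{3}})$, part (3) produces points $w_{k_{1}}',w_{k_{2}}',w_{k_{3}}'\in\real^{2}$ with $\|w_{k_{m}}'-w_{k_{m}}\|\le\e/2$ and
\[
[x+\delta w_{k_{1}}',x+\delta w_{k_{3}}']\cup[x+\delta w_{k_{3}}',x+\delta w_{k_{2}}']\subseteq\sset_{j}.
\]
Setting $v_{m}'=w_{k_{m}}'$ gives the desired triple: the required inclusion of the broken line in $\sset_{j}$ holds verbatim, and by the triangle inequality
\[
\|v_{m}'-v_{m}\|\le\|w_{k_{m}}'-w_{k_{m}}\|+\|w_{k_{m}}-v_{m}\|\le \e.
\]

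There is essentially no obstacle here beyond bookkeeping; the entire content of the statement is the uniformity in $(v_{1},v_{2},v_{3})$, and a single application of compactness of $D^{2}$ together with part (3) delivers it. The only minor care needed is to apply part (3) with $\e/2$ rather than $\e$, so that the two sources of error (approximating $v_{m}$ by a net point, and then perturbing the net point) combine to at most $\e$.
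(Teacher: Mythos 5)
Your argument for part~(4) is correct and is essentially identical to the paper's: both use compactness of $D^2$, choose a finite $\e/2$-net, and take the minimum of the finitely many values $\delta_3(i,j,\e/2,\cdot,\cdot,\cdot)$ over triples of net points. There is nothing to criticise in that step.

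The problem is that the statement you were asked to prove is the entire Corollary, comprising parts (1)--(4), and your proposal proves only part~(4), taking part~(3) as given. Parts (1)--(3) are where the real work lies and they are left entirely unaddressed. Part~(1) is obtained from Theorem~\ref{lemmain} by covering $S^1$ with finitely many balls centred at rational-slope directions and taking a minimum of the corresponding $\delta_0$'s. Part~(2) is obtained by applying part~(1) twice through an intermediate index $k$ with $i \prec k \prec j$: first move from $x\in\sset_i$ to a nearby segment in $\sset_k$, locate the point on it corresponding to $u$, and then apply part~(1) again from $\sset_k$ to $\sset_j$. Part~(3) is the most substantial: one must apply part~(2) to each of the two segments $[x+\delta v_1, x+\delta v_3]$ and $[x+\delta v_2, x+\delta v_3]$ and then invoke a quantitative transversality argument (Lemma~\ref{lemcross} in the paper) to guarantee that the two perturbed segments still intersect at a point close to $x+\delta v_3$; this is precisely why the broken-line structure in the statement is needed, and why the perturbation of the endpoints must be controlled in terms of $|\langle e_2^\perp,e_1\rangle|$. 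Without proving part~(3), the reduction you give for part~(4) has nothing to stand on, so the proposal as written does not establish the stated result.
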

\begin{proof}
1. We can find a finite collection of unit vectors in the plane
$$
e_{1},e_{2},\dots,e_{r} \in S^{1}
$$
with rational slopes such that
$S^{1}\subseteq \bigcup_{1\le s\le r} B(e_{s},\e)$. Let
\begin{equation*}
\delta_1=\min_{1\le s\le r}\delta_0(i,j,\e,e_{s}),
\end{equation*}
where $\delta_0$ is given by Theorem~\ref{lemmain}. Then for any $\delta \in(0, \delta_{1})$, $x \in \sset_{i}$ and $e \in S^{1}$
find $e_s$ with $\|e_{s}-e\| \leq \e$. As
$\delta < \delta_{0}(i,j,\e,e_s)$ there exists a line segment
$[x',x'+\delta e_s] \subseteq \sset_{j}$ with
$\|x'-x\| \leq \e \delta$ as required.

2. Pick any $k\in\setseq$ with $i\prec k\prec j$.
Let
$$
\delta_2=\min(\delta_1(i,k,\e/3),\delta_1(k,j,\e/3)).
$$
Suppose that $\delta \in(0, \delta_{2})$ and $u \in B(x,\delta)$. We can
write $u = x+\delta' f$ with $0 \leq \delta' < \delta$ and
$f \in S^{1}$. Then there exists $x' \in \real^{2}$, $f' \in S^{1}$ such that
$[x',x'+\delta f'] \subseteq \sset_k$ with
$\|x'-x\| \leq \e \delta/3$ and
$\|f'-f\| \leq \e/3$. As $x'+\delta'f' \in\sset_k$
we can find $u' \in \real^{2}$, $e' \in S^{1}$ such that
$[u',u'+\delta e'] \subseteq \sset_j$ with
$\|u'-(x'+\delta' f')\|\leq \e \delta/3$ and
$\|e'-e\| \leq \e/3$. Then
\begin{equation*}
\|u'-u\|
\leq \|u'-(x'+\delta' f')\| + \|x'-x\| + \delta' \|f'-f\|
\leq
\e \delta
\end{equation*}
as required.

3. Without loss of generality, we may assume that $v_{1},v_{2},v_{3}$ are
not collinear and that $\|v_{1}\|,\|v_{2}\|,\|v_{3}\| \leq
\frac{1}{4}$. Write
\begin{equation}\label{eqvr}
v_{3} = v_{1}+t_{1}e_{1} = v_{2}+t_{2}e_{2}
\end{equation}
where $0 < t_{1},t_{2} \leq \frac{1}{2}$ and $e_{1},e_{2} \in
S^{1}$. As $v_{1}$, $v_{2}$, $v_{3}$ are not collinear, the vectors
$e_{1}$ and $e_{2}$ are not parallel so that $\langle e_{2}^\perp,e_1 \rangle
\neq 0$.
We may assume $\e \leq t_{1},t_{2}$. Set
\begin{equation*}
\delta_{3}
=
\delta_{2}(i,j,\eta),
\end{equation*}
where
$\eta = \frac{1}{16} |\langle e_{2}^\perp,e_1\rangle| \e$.
Let $\delta \in(0, \delta_{3})$.
Write
\begin{equation}\label{defxr}
x_{m} = x+\delta  v_{m}
\qquad
(m=1,2)
\end{equation}
and put
$l_{m} = [x_{m},x_{m} + 2\delta t_{m} e_{m}]$.
As $\|x_{m}-x\| < \delta_{3}$, by part \eqref{corint-2} of this Corollary
we can find
$x_{1}',x_{2}' \in \real^{2}$ and $e_{1}',e_{2}' \in S^{1}$ with
$\|x_{m}'-x_{m}\| \leq \eta \delta$, $\|e_{m}'-e_{m}\| \leq \eta$ and
$[x_{m}',x_{m}'+\delta e_{m}'] \subseteq \sset_{j}$ for
$m =1,2$. Then as $t_{1},t_{2} \leq \frac{1}{2}$ we have
$l_{m}' \subseteq \sset_{j}$ where
$l_{m}' = [x_{m}',x_{m}' + 2\delta t_{m} e_{m}']$
for $m = 1,2$.

Note that
\eqref{eqvr} and \eqref{defxr} imply that
$x+\delta v_{3} = x_{m}+\delta t_{m} e_{m}$ for $m=1,2$.
Therefore $x_3=x+\delta v_{3}$ is a point of intersection of
$l_{1}$ and $l_{2}$.
The conditions of
Lemma~\ref{lemcross}
are readily
verified with $\alpha_{m} = 2\delta t_m$ and $\alpha = \e
\delta$ so that $l_{1}',l_{2}'$ intersect at a point $x_{3}'$ with
$\|x_{3}'-x_{3}\| \leq \e \delta$. Writing now $x_{m}' = x +
\delta v_{m}'$ for $m = 1,2,3$ we have $\|v_{m}'-v_{m}\| \leq
\e$, since $\|x_{m}'-x_{m}\| \leq \e \delta$, and
\begin{equation*}
[x+\delta v_{1}',x+\delta v_{3}'] \cup [x+\delta  v_{3}',x+\delta v_{2}']
\subseteq \sset_{j}.
\end{equation*}

4. Take $w_1,w_2,\dots,w_r$ in $D^2$ with $D^2 \subseteq \bigcup_{1 \leq s \leq r}B(w_s,\e/2)$. Set
\begin{equation*}
\delta_4=\min_{1\le s_1,s_2,s_3\le r}
\delta_3(i,j,\e/2,w_{s_1},w_{s_2},w_{s_3}).
\end{equation*}
This finishes the proof of the corollary.
\end{proof}

Let $n \geq 2$. For $i \in \setseq$ define $\mset_i \subseteq \real^{n}$ by
\begin{equation}\label{defmset}
\mset_{i} =\sset_{i} \times \real^{n-2}.
\end{equation}

Let $\|\cdot\|$ denote the Euclidean norm on $\real^n$. We use $[x,y] \subseteq \real^n$ to denote a closed line segment, where $x,y \in \real^n$.

\begin{theorem}\label{resultofsection}
The family of subsets $\{\mset_i \subseteq \real^{n} \mid i \in \setseq\}$ satisfies the following three statements.
\begin{enumerate}
\item[(i)]
If $i \in \setseq$ then $\mset_i$ is non-empty, closed and has measure zero.
\item[(ii)]
If $i,j \in \setseq$ and $i \preceq j$ then $\mset_i \subseteq \mset_j$.
\item[(iii)]
If $i,j \in \setseq$ with $i \prec j$ and $\e > 0$, then there exists
$\alpha = \alpha(i,j,\e)>0$
such that whenever $\delta \in (0,\alpha)$,
$u_{1},u_{2},u_{3}$ are in the closed unit ball $D^n$ of $\ \real^n$
and $x\in\mset_i$,
there exist $u_{1}',u_{2}',u_{3}'\in\real^n$
with $\|u_{m}'-u_{m}\| \leq \e$ and
\begin{equation*}
[x+\delta u_{1}',x+\delta u_{3}'] \cup
[x+\delta u_{3}',x+\delta u_{2}']
\subseteq \mset_{j}.
\end{equation*}
\end{enumerate}
\end{theorem}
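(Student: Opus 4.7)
The three statements should follow fairly directly from work already done on $\sset_i$ in Section~\ref{sec2}, with the role of the $(n-2)$ extra coordinates being essentially trivial since $\mset_i = \sset_i \times \real^{n-2}$ is a cylinder.

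For (i), I would observe that $\sset_i$ is non-empty (e.g.\ $0 \in \sset_i$), closed (as stated after \eqref{defsseti}), and of two-dimensional Lebesgue measure zero (as shown in the computation following the definition of $\F_m$ and $\Del_m$). Non-emptiness and closedness transfer immediately to the product $\mset_i = \sset_i \times \real^{n-2}$. For the measure, rather than invoking Fubini on a set of infinite measure, I would write $\mset_i$ as the countable union $\bigcup_{k\ge 1}\sset_i \times [-k,k]^{n-2}$, each summand having $n$-dimensional measure zero by Fubini (product of a planar null set with a bounded set).

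For (ii), the containment $\mset_i \subseteq \mset_j$ is immediate from the observation noted just after \eqref{defsseti} that $\sset_i \subseteq \sset_j$ whenever $i\preceq j$.

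For (iii), the key point is that a line segment in $\real^n$ lies in $\mset_j = \sset_j \times \real^{n-2}$ if and only if its projection onto the first two coordinates lies in $\sset_j$. Write $x = (y,z)$ with $y \in \sset_i$ and $z \in \real^{n-2}$, and decompose each $u_m = (a_m,b_m) \in \real^2 \oplus \real^{n-2}$. Since $\|u_m\| \le 1$ we have $\|a_m\|\le 1$, so $a_1,a_2,a_3 \in D^2$. I would then set $\alpha(i,j,\e) = \delta_4(i,j,\e)$ from Corollary~\ref{corint}(\ref{corint-4}), apply that corollary to the point $y\in\sset_i$ and vectors $a_1,a_2,a_3$, and obtain perturbations $a_1',a_2',a_3' \in \real^2$ with $\|a_m' - a_m\|\le \e$ and
\begin{equation*}
[y+\delta a_1', y+\delta a_3'] \cup [y+\delta a_3', y+\delta a_2'] \subseteq \sset_j.
\end{equation*}
Defining $u_m' = (a_m',b_m)\in\real^n$ gives $\|u_m'-u_m\| = \|a_m'-a_m\| \le \e$, and the projection of each segment $[x+\delta u_k', x+\delta u_l']$ onto $\real^2$ is precisely $[y+\delta a_k', y+\delta a_l']\subseteq \sset_j$, so both segments lie in $\mset_j$.

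I do not foresee any substantial obstacle: all of the work has been done in Section~\ref{sec2} in the planar case, and the cylindrical structure of $\mset_i$ means that the extra coordinates can simply be carried along unchanged. The only thing requiring a little care is the measure-zero assertion in (i), where one must avoid the $0\cdot\infty$ pitfall by decomposing $\real^{n-2}$ into bounded pieces before invoking Fubini.
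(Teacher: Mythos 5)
Your proof is correct and follows the paper's argument essentially verbatim: the paper likewise deduces (i) and (ii) directly from $\mset_i=\sset_i\times\real^{n-2}$ and proves (iii) by projecting onto the first two coordinates, applying Corollary~\ref{corint}(\ref{corint-4}) with $\alpha=\delta_4(i,j,\e)$, and lifting $v_m'$ to $u_m'=(v_m',h_m)$. Your extra caution over the measure-zero claim is harmless but unnecessary, since Tonelli gives $|\sset_i\times\real^{n-2}|=\int_{\real^{n-2}}|\sset_i|\,dz=\int_{\real^{n-2}}0\,dz=0$ directly without any $0\cdot\infty$ ambiguity.
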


\begin{proof}
Recall that for each $i \in \setseq$,
$\sset_{i}$ is a non-empty closed set of measure zero and that
$\sset_{i}\subseteq \sset_{j}$ whenever $i \preceq j$. Hence
\eqref{defmset} implies (i) and (ii). For (iii), let $\alpha=\delta_4(i,j,\e)$ from
Corollary~\ref{corint}, part \eqref{corint-4} and $\delta\in(0,\alpha)$. Suppose $x \in \mset_i$ and $u_m \in D^{n}$, $m = 1,2,3$. Write $x = (x',y')$ and $u_{m} = (v_{m},h_{m})$ with $x' \in \sset_i$, $v_{m} \in D^2$ and $y',h_{m} \in \real^{n-2}$.

By Corollary~\ref{corint}, part \eqref{corint-4}, we can find $v_{1}',v_{2}',v_{3}' \in\real^{2}$
with $\|v_{m}'-v_{m}\| \leq \e$ and
\begin{equation*}
[x'+\delta v_{1}',x'+\delta v_{3}'] \cup
[x'+\delta v_{3}',x'+\delta  v_{2}'] \subseteq
\sset_{j}.
\end{equation*}
Then setting $u_{m}' = (v_{m}',h_{m})$ we have $\|u_{m}'-u_{m}\| =
\|v_{m}'-v_{m}\| \leq \e$ and
\begin{equation*}
[x+\delta u_{1}',x+\delta u_{3}'] \cup
[x+\delta u_{3}',x+\delta  u_{2}'] \subseteq
\mset_{j}.
\end{equation*}
\end{proof}

\section{A point with almost locally maximal directional derivative}
\label{sec3}
In this section we work on a general real Hilbert space $H$, although
eventually we shall only be concerned with the case in which $H$ is
finite dimensional. Let $\langle,\rangle$ denote the inner product on
$H$, $\|\cdot\|$ the norm and let
$S(H)$ denote the unit sphere of $H$. We shall assume that the family
$\{\mset_{i} \subseteq H \mid i \in\setseq \}$
consists of closed sets such that $\mset_{i} \subseteq \mset_{j}$ whenever
$i \preceq j$, where the index set $(\setseq,\preceq)$ is a dense,
chain complete poset.

For a Lipschitz function $h\colon  H \rightarrow \real$ we write $D^h$ for the set of all pairs
$(x,e)\in H\times S(H)$ such that
the directional derivative $h'(x,e)$ exists and, for each $i\in\setseq$,
we let $D^{h}_{i}$  be the set of all $(x,e)\in D^h$ such that
$x\in\mset_i$.
If, in addition, $h\colon H\to\real$ is linear then we write $\|h\|$ for the operator norm of $h$.

\begin{theorem}
\label{thincr}
Suppose $f_{0}\colon  H \rightarrow \real$ is a Lipschitz function, $i_{0} \in\setseq$, $(x_{0},e_{0}) \in D^{f_{0}}_{i_{0}}$, $\delta_{0},\mu,K > 0$ and $j_{0} \in\setseq$ with $i_{0} \prec j_{0}$. Then there
exists a Lipschitz function $f\colon  H \rightarrow \real$
such that $f-f_{0}$ is linear with norm not greater than $\mu$ and a pair $(x,e) \in D^{f}_{i}$, where
$\|x-x_{0}\| \leq \delta_{0}$ and $i \in (i_{0},j_{0})$,
such that the directional derivative $f'(x,e) > 0$ is almost locally maximal in the following sense. For any
$\e > 0$ there exists $\delta_{\e} > 0$ and $j_{\e} \in (i,j_{0})$ such that
whenever $(x',e') \in D^{f}_{j_{\e}}$ satisfies

\begin{enumerate}
\item[(i)]
$\|x'-x\| \leq \delta_{\e}$, $f'(x',e') \geq f'(x,e)$ and
\item[(ii)]
for any $t\in\real$
\begin{equation}
\label{eqincr}
|(f(x'+te)-f(x'))-(f(x+te)-f(x))| \leq K \sqrt{f'(x',e')-f'(x,e)}|t|,
\end{equation}
\end{enumerate}
then we have $f'(x',e') < f'(x,e) + \e$.
\end{theorem}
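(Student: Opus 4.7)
My plan is an iterative construction in the spirit of \cite{P}, building Lipschitz perturbations $(f_n)$ of $f_0$ and pairs $(x_n, e_n) \in D^{f_n}_{i_n}$ whose limits give the desired $f$, $x$, $e$. First, using the density and chain-completeness of $(\setseq, \preceq)$, I build a strictly $\prec$-increasing chain $i_0 \prec i_1 \prec i_2 \prec \cdots$ whose least upper bound $i$ still lies in $(i_0, j_0)$: fix some $i^* \in (i_0, j_0)$, construct the chain inside $(i_0, i^*)$ by repeatedly invoking density, and note that its supremum $i \preceq i^* \prec j_0$ satisfies $i_n \prec i$ for every $n$ since $i = i_n$ would contradict $i_n \prec i_{n+1}$. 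In parallel, I fix a strictly $\prec$-decreasing sequence $(j_n)$ in $(i, j_0)$. Replacing $e_0$ by $-e_0$ if necessary, I may assume $f_0'(x_0, e_0) \geq 0$, and then set $f_1 = f_0 + \mu_0 \langle \cdot, e_0 \rangle$ for small $\mu_0 \in (0, \mu)$ so that $\alpha_1 := f_1'(x_0, e_0) > 0$, and take $(x_1, e_1) = (x_0, e_0)$.

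Inductively, given Lipschitz $f_n$ and $(x_n, e_n) \in D^{f_n}_{i_n}$ with $\alpha_n := f_n'(x_n, e_n) > 0$, I let $A_n$ denote the set of pairs $(x', e') \in D^{f_n}_{j_n}$ satisfying $\|x' - x_n\| \leq \delta_n$, $f_n'(x', e') \geq \alpha_n$, and the scaled Lipschitz-type increment bound
\begin{equation*}
|(f_n(x' + t e_n) - f_n(x')) - (f_n(x_n + t e_n) - f_n(x_n))| \leq K\sqrt{f_n'(x', e') - \alpha_n}\,|t| \qquad (t \in \real).
\end{equation*}
Since $(x_n, e_n) \in A_n$ trivially, the supremum $\sigma_n := \sup\{f_n'(x', e') : (x', e') \in A_n\}$ satisfies $\sigma_n \geq \alpha_n$. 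I then pick $(x_{n+1}, e_{n+1}) \in A_n$ with $x_{n+1} \in \mset_{i_{n+1}}$ (note $\mset_{i_{n+1}} \subseteq \mset_{j_n}$ since $i_{n+1} \prec i \prec j_n$) and $f_n'(x_{n+1}, e_{n+1}) \geq \sigma_n - \e_n$, and set $f_{n+1} = f_n + \eta_{n+1} \langle \cdot, e_{n+1} \rangle$. The scales $\delta_n, \e_n, \eta_n > 0$ are chosen with $\sum \delta_n \leq \delta_0$, $\mu_0 + \sum \eta_n \leq \mu$, $\eta_{n+1} > 2\e_n$ (so that $\alpha_n$ is strictly increasing), and $\sum \sqrt{\e_n/\eta_{n+1}} < \infty$. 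The role of the linear boost $\eta_{n+1}\langle \cdot, e_{n+1}\rangle$ is that $f_{n+1}'(x', e')$ is maximal in $e'$ precisely at $e' = e_{n+1}$; combined with the identity $\|e_{n+2} - e_{n+1}\|^2 = 2(1 - \langle e_{n+2}, e_{n+1}\rangle)$, near-maximality at step $n+1$ forces $\|e_{n+2} - e_{n+1}\|^2 \leq C\e_n/\eta_{n+1}$, making $(e_n)$ Cauchy. Observe also that the increment expression above is invariant under adding a linear functional, so the constraints propagate cleanly between iteration steps.

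The limits $x = \lim x_n$, $e = \lim e_n \in S(H)$ and $f = \lim f_n$ then exist: $x \in \mset_i$ by closedness (since $x_n \in \mset_{i_n} \subseteq \mset_i$), $\|x - x_0\| \leq \sum \delta_n \leq \delta_0$, and $f - f_0 = \mu_0 \langle \cdot, e_0\rangle + \sum \eta_n \langle \cdot, e_n\rangle$ is a bounded linear functional of norm at most $\mu$. Monotonicity gives $\alpha := \lim \alpha_n \geq \alpha_1 > 0$.

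The hardest step, and the main obstacle I anticipate, is establishing that $f'(x, e)$ exists, equals $\alpha$, and is almost locally maximal in the required sense; the interchange of $\lim_n$ with the directional derivative limit is not free for Lipschitz functions. The scaled increment constraint built into $A_n$ is tailored precisely for this role: it provides uniform control on how the increments of $f_n$ in direction $e_n$ vary with the base point, which in the limit forces the difference quotients $(f(x + te) - f(x))/t$ to converge to $\alpha$ as $t \to 0$. For the almost-maximality, given $\e > 0$, I would choose $N$ large with $\e_N < \e/3$, take $j_\e = j_N$ and $\delta_\e$ small enough that any $(x', e') \in D^f_{j_\e}$ satisfying the hypotheses of the theorem with respect to $f, x, e, K$ is, up to errors that vanish as $N \to \infty$ (using uniform convergence $f_n \to f$ on bounded sets and $(x_n, e_n) \to (x, e)$), a member of $A_N$ with respect to $f_N, x_N, e_N$; since such a pair then has $f_N'(x', e') \leq \sigma_N \leq \alpha_N + \e_N$, passing to the limit in $N$ yields $f'(x', e') \leq \alpha + \e$ as required.
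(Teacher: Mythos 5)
Your overall architecture mirrors the paper's (iterative linear perturbations, near-maximal selection, limits), and you correctly identify the linear boost as the mechanism making $(e_n)$ Cauchy. But there are two genuine gaps that would have to be fixed, and one hand-waved step that is precisely where these gaps bite.

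First, you fix the chain $i_0 \prec i_1 \prec \cdots$ in advance and then demand the near-maximizer of $A_n$ to lie in $\mset_{i_{n+1}}$. There is no reason such a point exists: the supremum $\sigma_n$ is taken over pairs $(x',e')$ with $x' \in \mset_{j_n}$, a much larger set, and the near-maximizers could all lie outside $\mset_{i_{n+1}}$. The paper avoids this by defining $D_n$ as a union over \emph{all} $i \in (i_{n-1},j_{n-1})$ and only afterwards choosing $i_n$ with $x_n \in \mset_{i_n}$ (steps (5) and (8) of Algorithm~\ref{alg}); the supremum is attained over the union, and the witness $i_n$ is extracted a posteriori. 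Chain-completeness of $\setseq$ then yields the limit index $i$ inside $(i_0,j_0)$.

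Second, and more fundamentally, your increment constraint in $A_n$ has no slack: you demand
\[
|(f_n(x'+te_n)-f_n(x'))-(f_n(x_n+te_n)-f_n(x_n))| \leq K\sqrt{f_n'(x',e')-\alpha_n}\,|t|
\]
with nothing added to the square-root. This relation is not chainable. For your convergence argument for $(e_n)$ you implicitly need $A_{n+1} \subseteq A_n$ (so that $f_n'(x_{n+2},e_{n+2}) \leq \sigma_n$), and in your final almost-maximality step you need to pass from the constraint at $(x,e,f)$ to near-membership in $A_N$ relative to $(x_N,e_N,f_N)$. In either direction, writing out the chaining produces extra additive terms of the form $K\sqrt{f_n'(x_{n+1},e_{n+1})-\alpha_n}\,|t|$ and $2\,\mathrm{Lip}(f_n)\|e_{n+1}-e_n\|\,|t|$ that cannot be absorbed into $K\sqrt{\cdot}\,|t|$. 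The paper's relation $\pairorder{x}{e}{x'}{e'}{h}{\sigma}$ carries an explicit slack $K\sigma|t|$ with $\sigma = \sigma_{n-1}-\e$ precisely to absorb these errors; the parameters in Algorithm~\ref{alg} are tuned (e.g.\ $\sigma_n < \sigma_{n-1}/4$, $t_n < \sigma_{n-1}/4n$) so that the accumulated error stays inside the slack, and Lemma~\ref{lemleq}(i)--(ii) establishes the nesting $D_{n+1}\subseteq D_n$ only because of this. Without the $\sigma$-slack the inductive inclusion fails and the constraints do not survive passage to the limit.

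You flag the final step (existence of $f'(x,e)$ and almost-maximality) as the hard part and sketch it, but the sketch relies on exactly the chaining that the missing slack makes impossible. Concretely: ``any $(x',e')$ satisfying the hypotheses of the theorem with respect to $f,x,e,K$ is, up to errors\ldots, a member of $A_N$ with respect to $f_N,x_N,e_N$'' is the crux, and the errors are not small relative to $K\sqrt{f_N'(x',e')-\alpha_N}\,|t|$ when $f'(x',e')$ is only slightly above $f'(x,e)$ — this is why the paper's proof first uses the hypothesis $f'(x',e') \geq f'(x,e)+\e$ to bound $\sqrt A - \sqrt B$ and then carefully combines it with the slack $\sigma_{n-1}-\e_n'$ to establish \eqref{eqlast}. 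Introducing a decaying slack parameter into your $A_n$, and replacing the fixed $\mset_{i_{n+1}}$ by a union over an open interval of indices, would essentially convert your proposal into the paper's proof.
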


We devote the rest of this section to proving Theorem~\ref{thincr}.

Without loss of generality we may assume
$\mathrm{Lip}(f_{0}) \leq 1/2$ and $K \geq 4$. By replacing $e_{0}$ with $-e_{0}$ if necessary we may assume $f_{0}'(x_{0},e_{0}) \geq 0$.

If $h$ is a Lipschitz function, the pairs $(x,e)$, $(x',e')$ belong to $D^h$ and $\sigma\ge0$
we write
\begin{equation}\label{eqleq}
\pairorder xe{x'}{e'}h\sigma
\end{equation}
if
$h'(x,e) \leq h'(x',e')$ and
for all $t \in \real$,
\begin{align*}
|(h(x'+te)-h(x'))-(h(x+te)-h(x))| \leq K\Bigl(\sigma + \sqrt{h'(x',e')-h'(x,e)}\Bigr)|t|.
\end{align*}

We shall construct by recursion a sequence of Lipschitz functions
$f_{n}\colon  H \rightarrow \real$, sets $D_n \subseteq D^{f_{0}}$ and pairs $(x_n,e_n) \in D_n$ such that the directional derivative $f_n'(x_n,e_n)$ is within $\lambda_n$ of its supremum over $D_n$, where $\lambda_n > 0$. We shall show that $f=\lim f_n$ and $(x,e)=\lim(x_n,e_n)$ have the desired properties. The constants $\delta_m$ will be used to bound $\|x_n-x_m\|$ for $n \geq m$ whereas $\sigma_m$ will bound $\|e_n-e_m\|$ and $t_m$ will control $\|f_n-f_m\|$ for $n\geq m$.

The recursion starts with
$f_{0}$, $i_{0}$, $j_{0}$, $x_{0}$, $e_{0}$, $\delta_{0}$
defined in the statement of Theorem~\ref{thincr}.
Let $\sigma_{0} = 2$ and $t_{0} = \min(1/4,\mu/2)$.
For $n \geq 1$ we shall pick
\begin{equation*}
f_{n},\sigma_{n},t_{n},\lambda_{n},D_{n},x_{n},e_{n},\e_{n},i_{n},j_{n},\delta_{n}
\end{equation*}
in that order where
\begin{itemize}
\item
$i_{n},j_{n} \in\setseq$ with $i_{n-1}\prec i_n\prec j_n \prec j_{n-1}$,
\item
$D_{n}$ are non-empty subsets of $D^{f_{0}} \subseteq H\times S(H)$,
\item
$\sigma_{n},t_{n},\lambda_{n},\e_{n},\delta_{n} > 0$,
\item
$f_{n}\colon H\to \real$ are Lipschitz functions,
\item $(x_n,e_n)\in D_n$.
\end{itemize}
\begin{algorithm}\label{alg}
Given $n\ge1$ choose
\begin{enumerate}
\item[(1)]
\label{one} $f_{n}(x) = f_{n-1}(x) + t_{n-1}\langle x,e_{n-1}\rangle$,
\item[(2)]
\label{two} $\sigma_{n} \in (0,\sigma_{n-1}/4)$,
\item[(3)]
\label{three} $t_{n} \in (0,\min(t_{n-1}/2,\sigma_{n-1}/4n))$,
\item[(4)]
\label{four} $\lambda_{n} \in (0,t_{n} \sigma_{n}^{2}/2)$,
\item[(5)]
\label{five} $D_{n}$ to be the set of all pairs $(x,e)$ such that
$(x,e) \in D^{f_n}_{i}=D^{f_0}_i$ for some $i \in (i_{n-1},j_{n-1})$,
$\|x-x_{n-1}\| < \delta_{n-1}$ and
$$
\pairorder {x_{n-1}}{e_{n-1}}{x}{e}{f_{n}}{\sigma_{n-1}-\e}
$$
for some $\e \in( 0,\sigma_{n-1})$,
\item[(6)]
\label{six} $(x_{n},e_{n}) \in D_{n}$ such that
$f_{n}'(x,e)
\leq
f_n'(x_n,e_n)+\lambda_n$  for every $(x,e)\in D_n$,
\item[(7)]
$\e_{n} \in (0,\sigma_{n-1})$ such that
$\pairorder {x_{n-1}}{e_{n-1}}{x_n}{e_n}{f_n}{\sigma_{n-1}-\e_n}$,
\item[(8)]
$i_{n} \in (i_{n-1},j_{n-1})$ such that $x_{n} \in \mset_{i_{n}}$,
\item[(9)]
$j_{n} \in (i_{n},j_{n-1})$ and
\item[(10)]
\label{ten} $\delta_{n} \in (0,(\delta_{n-1} - \|x_{n}-x_{n-1}\|)/2)$
such that
for all $t$ with $|t| < \delta_{n}/\e_{n}$
\begin{align}\label{eqfnen-1}
|(f_{n}(x_{n}+te_{n})-&f_{n}(x_{n}))-(f_{n}(x_{n-1}+te_{n-1})-f_{n}(x_{n-1}))|\\
\leq
(&f_{n}'(x_{n},e_{n})-f_{n}'(x_{n-1},e_{n-1})+\sigma_{n-1})|t|.\notag
\end{align}
\end{enumerate}
\end{algorithm}

Note that (5) implies that $(x_{n-1},e_{n-1})\in D_n$,
and so $D_n \neq \emptyset$; further as $f_n$ is Lipschitz we see $\sup_{(x,e) \in D_n} f_n'(x,e) < \infty$. Therefore
we are able to pick $(x_n,e_n)\in D_n$ with the property of (6).

The definition (5) of $D_n$ then implies that $\e_n$ and $i_n$ exist with the properties of (7)--(8). Further, we have $\|x_n-x_{n-1}\| < \delta_{n-1}$ and
\begin{equation}\label{eqfnder}
f_{n}'(x_{n},e_{n}) \geq f_{n}'(x_{n-1},e_{n-1}).
\end{equation}
These allow us to choose $\delta_n$ as in (10).

Observe that the positive sequences $\sigma_n$, $t_n$, $\lambda_n$,
$\delta_n$, $\e_n$ all tend to $0$:
$\sigma_n\in(0,\sigma_{n-1}/4)$ by (2),
$t_n\in(0,t_{n-1}/2)$ by (3),
$\lambda_n\in(0,t_n\sigma_n^2/2)$ by (4),
$\delta_n\in(0,\delta_{n-1}/2)$ by (10) and
$\e_n\in(0,\sigma_{n-1})$ by (7).
 Further from (10),
\begin{equation}\label{ballsub}
\overline{B(x_{n},\delta_{n})} \subseteq B(x_{n-1},\delta_{n-1}).
\end{equation}

Note that (1)
and (3)
imply
$f_n(x)=f_0(x)+\langle x,\sum_{k=0}^{n-1} t_k e_k\rangle$ and, as the
Lipschitz constant
$\mathrm{Lip}(f_{0}) \leq \frac{1}{2}$, $t_{k+1} \leq t_{k}/2$ and
$t_{0} \leq \frac{1}{4}$, we deduce that $\mathrm{Lip}(f_{n}) \leq 1$
for all $n$.

Let $\e_{n}' > 0$ be given by
\begin{equation}\label{defepsn'}
\e_{n}' = \min(\e_{n}/2,\sigma_{n-1}/4).
\end{equation}
\begin{lemma}
\label{lemleq}
The following three statements hold.
\begin{enumerate}
\item[(i)]
If $n\ge1$ and $(x,e) \in D_{n+1}$, then
$$
\pairorder {x_{n-1}}{e_{n-1}}{x}{e}{f_n}{\sigma_{n-1}-\e'_n}.
$$
\item[(ii)]
If $n\geq1$ then $D_{n+1} \subseteq D_{n}$.
\item[(iii)]
If $n\ge0$ and $(x,e) \in D_{n+1}$, then
$\|e-e_{n}\| \leq \sigma_{n}$.
\end{enumerate}
\end{lemma}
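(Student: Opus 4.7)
The plan is simultaneous induction on $n$, establishing (i), (ii), (iii) in that order at each level. The base, (iii) at $n=0$, is immediate from $\sigma_0=2$ and the fact that any two unit vectors are at distance at most $2$. Assuming (i), (ii), (iii) at all smaller levels, I derive each at level $n$ as follows.

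For (i) at level $n\geq1$: given $(x,e)\in D_{n+1}$, pick $\e>0$ with $\pairorder{x_n}{e_n}{x}{e}{f_{n+1}}{\sigma_n-\e}$. The crucial observation is that $f_{n+1}-f_n=t_n\langle\cdot,e_n\rangle$ is linear and hence annihilated by the mixed second difference $\Phi^{(h)}_{y,z}(t,d):=(h(z+td)-h(z))-(h(y+td)-h(y))$; explicitly, $\Phi^{(f_{n+1})}_{y,z}(t,d)=\Phi^{(f_n)}_{y,z}(t,d)$ for all relevant $y,z,t,d$. This allows the $D_{n+1}$-relation to be rewritten in the $f_n$ picture as $|\Phi^{(f_n)}_{x_n,x}(t,e_n)|\leq K(\sigma_n-\e+\sqrt\gamma)|t|$, where $\gamma=\beta_1-(t_n/2)\|e-e_n\|^2\leq\beta_1:=f_n'(x,e)-f_n'(x_n,e_n)$. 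The derivative inequality $f_{n+1}'(x_n,e_n)\leq f_{n+1}'(x,e)$ similarly rearranges to $\beta_1\geq(t_n/2)\|e-e_n\|^2\geq0$, which combined with (7) yields the first-part inequality $f_n'(x_{n-1},e_{n-1})\leq f_n'(x,e)$.

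For the difference-quotient bound, decompose $\Phi^{(f_n)}_{x_{n-1},x}(t,e_{n-1})$ via $x_n$ and apply (7) to the first summand and the rewritten $D_{n+1}$-bound (after changing direction from $e_n$ to $e_{n-1}$ at Lipschitz cost $2|t|\|e_{n-1}-e_n\|$) to the second. Writing $\beta_0:=f_n'(x_n,e_n)-f_n'(x_{n-1},e_{n-1})$ and $\beta:=\beta_0+\beta_1$, the two square-root terms are collapsed through $\sqrt{\beta_0}+\sqrt{\beta_1}\leq\sqrt\beta+\sqrt{\beta_0}$, and the auxiliary estimate $\beta_0\leq\lambda_{n-1}$ follows from the inductive (ii) at $n-1$ (which places $(x_n,e_n)$ in $D_{n-1}$) together with the almost-maximality in (6) at level $n-1$. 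The leftover constants $K\sigma_n+2\|e_{n-1}-e_n\|+K\sqrt{\lambda_{n-1}}$ are absorbed into the slack $K(\e_n-\e'_n)$ by combining (2), (3), (4), the inductive (iii) at $n-1$ (which gives $\|e_{n-1}-e_n\|\leq\sigma_{n-1}$), the assumption $K\geq 4$, and the definition $\e'_n=\min(\e_n/2,\sigma_{n-1}/4)$.

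Parts (ii) and (iii) are then straightforward consequences. For (ii), given $(x,e)\in D_{n+1}$, the three defining conditions of $D_n$ are checked individually: the index condition uses (8), (9), and the equality $D^{f_{n+1}}_i=D^{f_n}_i$ (since $f_{n+1}-f_n$ is linear, directional derivatives of one exist iff they do of the other); the localization $\|x-x_{n-1}\|<\delta_{n-1}$ follows by the triangle inequality and (10); the ordering relation with $\e=\e'_n>0$ is supplied by (i). For (iii) at level $n\geq1$, (ii) just proved puts $(x,e)$ in $D_n$, so (6) gives $\beta_1\leq\lambda_n$; combined with $(t_n/2)\|e-e_n\|^2\leq\beta_1$ from the $D_{n+1}$-derivative inequality and the bound $\lambda_n<t_n\sigma_n^2/2$ from (4), this forces $\|e-e_n\|<\sigma_n$. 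The principal difficulty lies in the bookkeeping in (i), where several inductive hypotheses must be combined with the algorithm's scaling choices to control the direction-change penalty and the residual $K\sqrt{\beta_0}$.
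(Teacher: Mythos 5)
There is a genuine gap in your proof of part (i), and it concerns the treatment of the difference-quotient bound. You apply Algorithm step (7) — which only gives the slack $\sigma_{n-1}-\e_n$ — uniformly in $t$, so the only slack available to absorb the extra error terms (the direction-change cost $2\|e_{n-1}-e_n\|$, the residual $K\sigma_n$, and the floating $K\sqrt{\beta_0}$) is $K(\e_n-\e_n')$. But $\e_n$ is chosen in step (7) \emph{after} $\sigma_n$ (step (2)) and $\delta_{n-1}$, merely as a witness to the strict inequality in the definition of $D_n$, so it can be arbitrarily small and unrelated to $\sigma_n$ or $\sigma_{n-1}$. In particular $\e_n-\e_n'=\e_n/2$ when $\e_n\le\sigma_{n-1}/2$, and you cannot expect $\sigma_n + (2/K)\|e_{n-1}-e_n\| \leq \e_n/2$: the left side is bounded below by $\sigma_n > 0$, a quantity fixed before $\e_n$. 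The inequality you need to establish simply does not follow from the Algorithm's choices. The paper avoids this by splitting on $|t|$. For $|t|<\delta_n/\e_n$ it uses Algorithm step (10), equation \eqref{eqfnen-1}, which gives a bound on the first summand with the \emph{full} slack $\sigma_{n-1}$ (not $\sigma_{n-1}-\e_n$), leaving room to pay for $\sigma_n$ and the direction-change penalty within $\sigma_{n-1}/4 \geq \e_n'$. For $|t|\geq\delta_n/\e_n$ it uses (7), but then $\|x-x_n\|\le\delta_n\le\e_n|t|$ is absorbed into the Lipschitz bound, recovering $\e_n/2$ of slack, which covers $\e_n'\le\e_n/2$. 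Without the case split, there is no way to control both regimes simultaneously.

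A second, smaller, problem: your bound $\beta_0\leq\lambda_{n-1}$ relies on (ii) at level $n-1$ and the constant $\lambda_{n-1}$, neither of which exists for $n=1$ (the Algorithm defines $\lambda_m$, and the sets $D_m$, only for $m\geq 1$). The paper sidesteps the need to bound $\beta_0$ at all: it shows $\beta_0 + K\sqrt{\gamma} \leq K\sqrt{B}$ directly via the elementary inequality $K(\sqrt{B}-\sqrt{A}) \geq B-A$ (valid since $0\le A\le B\le 2$ and $K\ge 4$), where $A=\beta_1$ and $\gamma=f_{n+1}'(x,e)-f_{n+1}'(x_n,e_n)\leq A$. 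Your parts (ii) and (iii) are fine and essentially match the paper's argument, as does your reformulation of the $D_{n+1}$-membership in the $f_n$-picture using linearity of $f_{n+1}-f_n$; it is only the absorption of error terms in part (i) that is broken.
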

\begin{proof}
For $n=0$, condition
(iii) is satisfied as $\sigma_0=2$.
Now it is enough to check that if $n\geq1$ and the condition (iii) is satisfied
for $n-1$, then
conditions (i)--(iii) are satisfied for $n$.
The Lemma then will follow by induction.

Assume $n\ge1$ and $\|e'-e_{n-1}\| \leq\sigma_{n-1}$
for all $(x',e') \in D_{n}$.
Then we have
\begin{equation}\label{enen-1}
\|e_{n}-e_{n-1}\|
\leq
\sigma_{n-1}
\end{equation}
as $(x_{n},e_{n}) \in D_{n}$. Now fix any $(x,e) \in D_{n+1}$.
Using (1)
and (5)
of Algorithm~\ref{alg} and $\langle e,e_n\rangle \leq 1$ we get
\begin{align}\label{eqfn-fn+1}
A&:=f_{n}'(x,e)-f_{n}'(x_{n},e_{n})\\
& = f_{n+1}'(x,e)-t_n \langle e,e_n \rangle - f_{n+1}'(x_n,e_n) + t_n
\notag
\\
& \geq
f_{n+1}'(x,e) - f_{n+1}'(x_{n},e_{n})
\geq 0,\notag
\end{align}
so that
$$
f_{n}'(x,e) \geq f_{n}'(x_{n},e_{n}) \geq f_{n}'(x_{n-1},e_{n-1})
$$
by \eqref{eqfnder}.
If we let
$B =f_{n}'(x,e)-f_{n}'(x_{n-1},e_{n-1})$ we have
\begin{equation*}
K(\sqrt{B} - \sqrt{A})
\geq B-A
= f_{n}'(x_{n},e_{n})-f_{n}'(x_{n-1},e_{n-1}),
\end{equation*}
since $K \geq 4$ and $0 \leq A \leq B \leq 2$, using $\mathrm{Lip}(f_n) \leq 1$ in the final inequality.
Together with \eqref{eqfn-fn+1} this implies that
\begin{equation}\label{ksqrtb}
(f_{n}'(x_{n},e_{n})-f_{n}'(x_{n-1},e_{n-1})) +
K\sqrt{f_{n+1}'(x,e)-f_{n+1}'(x_{n},e_{n})}
\leq
K\sqrt B.
\end{equation}

In order to prove (i), we need to establish an upper estimate for
\begin{equation}\label{eqii}
|(f_{n}(x+te_{n-1})-f_{n}(x))-(f_{n}(x_{n-1}+te_{n-1})-f_{n}(x_{n-1}))|.
\end{equation}
For every $|t|<\delta_n/\e_n$, using
\begin{align*}
|(f_n(x+te_n)&-f_n(x))-(f_{n}(x_n+te_{n})-f_{n}(x_n))|\\
&=
|( f_{n+1}(x+te_n)-f_{n+1}(x))-(f_{n+1}(x_n+te_{n})-f_{n+1}(x_n))|\\
&\leq
K \Bigl(\sigma_{n} + \sqrt{f_{n+1}'(x,e)-f_{n+1}'(x_{n},e_{n})}\Bigr)|t|
\end{align*}
and \eqref{eqfnen-1}, we get from \eqref{ksqrtb}
\begin{align*}
|(f_{n}(x&+te_{n-1})-f_{n}(x))-(f_{n}(x_{n-1}+te_{n-1})-f_{n}(x_{n-1}))|\\
&\le \sigma_{n-1}|t| + K\Bigl(\sigma_{n} +
\sqrt{f_{n}'(x,e)-f_{n}'(x_{n-1},e_{n-1})}\Bigr)|t| + \|e_{n}-e_{n-1}\|\cdot|t|.
\end{align*}
Using \eqref{enen-1} and $K\ge4$ we see that the latter does not exceed
\begin{align*}
K\Bigl(\sigma_{n-1}/2 + \sigma_{n} +
&\sqrt{f_{n}'(x,e)-f_{n}'(x_{n-1},e_{n-1})}\Bigr)|t|\\
\leq
K\Bigl(\sigma_{n-1}-\e_{n}' +
&\sqrt{f_{n}'(x,e)-f_{n}'(x_{n-1},e_{n-1})}\Bigr)|t|
\end{align*}
as $\sigma_n\le\sigma_{n-1}/4$ by (2)
of Algorithm~\ref{alg}
and $\e_n'\leq\sigma_{n-1}/4$ by \eqref{defepsn'}.

Now we consider the case $|t| \geq \delta_{n}/\e_{n}$.
We have from (7)
of Algorithm~\ref{alg} that
$$\pairorder {x_{n-1}}{e_{n-1}}{x_{n}}{e_{n}}{f_{n}}{\sigma_{n-1}-\e_{n}}.$$
Using
this together with
\begin{align*}
\max\Bigl\{
|f_n(&x)-f_n(x_n)|,
|f_n(x+te_{n-1})-f_n(x_n+te_{n-1})|\Bigr\} \\
&\le
\|x-x_n\|
\le
\delta_n
\le
\e_n|t|
\le
K\e_n|t|/4
\end{align*}
we get
\begin{align*}
|(f_{n}(x&+te_{n-1})-f_{n}(x))-(f_{n}(x_{n-1}+te_{n-1})-f_{n}(x_{n-1}))|\\
&\leq
K\Bigl(\sigma_{n-1}-\e_{n}/2 + \sqrt{f_{n}'(x_{n},e_{n}) -
  f_{n}'(x_{n-1},e_{n-1})}\Bigr)|t|\\
&\leq
K\Bigl(\sigma_{n-1}-\e_{n}' + \sqrt{f_{n}'(x,e) -
  f_{n}'(x_{n-1},e_{n-1})}\Bigr)|t|
\end{align*}
because $f_{n}'(x_{n},e_{n}) \leq f_{n}'(x,e)$ from \eqref{eqfn-fn+1}.
Thus (i) is proved.

Further, for $(x,e) \in D_{n+1}$ we have $x \in B(x_{n},\delta_{n}) \subseteq B(x_{n-1},\delta_{n-1})$, using \eqref{ballsub}, and
$x \in \mset_{i}$ where
$$
i \in
(i_{n+1},j_{n+1}) \subseteq (i_{n},j_{n}).
$$
Hence $(x,e) \in D_{n}$ follows from (i). This establishes (ii).

Finally to see (iii),
let $(x,e) \in D_{n+1}$ and recall that (5)
of Algorithm~\ref{alg}
implies $f_{n+1}'(x_{n},e_{n}) \leq f_{n+1}'(x,e)$. By (1)
of Algorithm~\ref{alg}, this
can be written
\begin{equation*}f_n'(x_n,e_n) + t_n \langle e_n,e_n\rangle
\le f_n'(x,e) + t_n \langle e,e_n\rangle.
\end{equation*}
Since $(x,e) \in D_{n}$ by (ii), we have
$f_{n}'(x,e) \leq f_{n}'(x_n,e_n) + \lambda_{n}$. Combining the two inequalities we get
$t_{n}\leq t_{n} \langle e,e_{n}\rangle + \lambda_{n}$.
Hence $\langle e,e_{n}\rangle \geq 1 - \lambda_{n}/t_{n}$ so that
\begin{equation*}
\|e-e_n\|^{2} =
2-2\langle e,e_{n}\rangle \leq 2\lambda_{n}/t_{n} \leq
\sigma_{n}^{2}
\end{equation*}
using (4)
of Algorithm~\ref{alg}.

This completes the proof of the lemma.
\end{proof}

We now show that the sequences $x_{n}$, $e_{n}$ and $f_{n}$ converge and establish some properties of their limits.

Recall first that $i_{n-1} \prec i_{n} \prec j_{n} \prec j_{n-1}$ for all
$n \geq 1$. The set $\{i_{n}\mid n \in \mathbb{N}\}$ is thus a non-empty chain in $\setseq$. Therefore, it has a
supremum $i\in\setseq$. Further, as $i_{n} \in (i_{m+1},j_{m+1})$ for $n \geq m+2$, we know $i \in [i_{m+1},j_{m+1}] \subseteq (i_{m},j_{m})$ for all $m$.

\begin{lemma}
\label{limitproperties}

We have $x_{m} \rightarrow x$, $e_{m} \rightarrow e$ and
$f_{m} \rightarrow f$ where
\begin{enumerate}
\item[(i)]
$f\colon  H \rightarrow \real$ is a Lipschitz function with $\mathrm{Lip}(f) \leq 1$,
\item[(ii)]
$f-f_{m}$ is linear and $\|f-f_{m}\| \leq 2t_{m}$ for all $m$,
\item[(iii)]
$x \in \mset_{i}$, $\|x-x_{m}\| < \delta_{m}$ and
$\|e-e_{m}\| \leq \sigma_{m}$,
\item[(iv)]
$f'(x,e)$ exists, is positive and $f_{m}'(x_{m},e_{m}) \nearrow f'(x,e)$,
\item[(v)]
$\pairorder {x_{m-1}}{e_{m-1}}{x}{e}{f_m}{\sigma_{m-1}-\e'_m}$ and
\item[(vi)]
$(x,e) \in D_{m}$ for all $m$.
\end{enumerate}
\end{lemma}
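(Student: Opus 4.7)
The plan is to establish (i)--(vi) in order, with (iv) as the crux, deriving everything from three ingredients: the linear telescoping structure of $f_n-f_{n-1}$, the proximity constants chosen in Algorithm~\ref{alg}, and the estimate \eqref{eqfnen-1}.

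For (i)--(iii): iterating step (1) of Algorithm~\ref{alg} gives $f_n = f_0+\langle\cdot,\sum_{k=0}^{n-1}t_ke_k\rangle$; since $t_{k+1}<t_k/2$, the tail $T_m = \sum_{k\ge m}t_ke_k$ has $\|T_m\|\le 2t_m$, so $f_n$ converges pointwise to $f=f_0+\langle\cdot,T_0\rangle$, and $f-f_m$ is linear of norm at most $2t_m$. The uniform bound $\mathrm{Lip}(f_n)\le 1$ passes to the limit. For (iii), (10) yields $\|x_n-x_{n-1}\|<\delta_{n-1}-2\delta_n$; telescoping (with $\delta_n<\delta_{n-1}/2$ ensuring convergence) gives $(x_n)$ Cauchy with $\|x-x_m\|<\delta_m$. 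Lemma~\ref{lemleq}(iii) applied to $(x_n,e_n)\in D_n\subseteq D_{m+1}$ for $n\ge m+1$ yields $\|e_n-e_m\|\le\sigma_m$, so $(e_n)$ converges to some $e$ with $\|e-e_m\|\le\sigma_m$. The chain $(i_n)$ has supremum $i$ in the chain-complete poset $\setseq$; each $x_n\in\mset_{i_n}\subseteq\mset_i$, and closedness of $\mset_i$ gives $x\in\mset_i$.

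The heart of the proof is (iv). Set $L_n=f_n'(x_n,e_n)$; step (7) together with \eqref{eqleq} shows $L_n\ge f_n'(x_{n-1},e_{n-1})=L_{n-1}+t_{n-1}$, producing an increasing sequence bounded by $\mathrm{Lip}(f_n)\le 1$, so $L_n\nearrow L$ with $L\ge L_0+t_0>0$. To identify $f'(x,e)$ with $L$, introduce $h_n(t)=f_n(x_n+te_n)-f_n(x_n)$ and note that $f_n(x_{n-1}+te_{n-1})-f_n(x_{n-1})=h_{n-1}(t)+t_{n-1}t$, so \eqref{eqfnen-1} becomes $|h_n(t)-h_{n-1}(t)-t_{n-1}t|\le(a_n+\sigma_{n-1})|t|$ for $|t|<\delta_n/\e_n$, where $a_n=L_n-L_{n-1}-t_{n-1}\ge 0$. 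Telescoping this from $m$ to $N$ (valid when $|t|<\delta_n/\e_n$ throughout $m<n\le N$), combined with the Lipschitz estimate
\begin{equation*}
|G(t)/t-h_N(t)/t-\langle e,T_N\rangle|\le 2\delta_N/|t|+\sigma_N,
\end{equation*}
where $G(t)=f(x+te)-f(x)$, will give $|G(t)/t-L|=O(\eta)$ once $\eta>0$ is fixed and $m$ is chosen so that $L-L_m$, $2t_m$ and the geometric tail $\sum_{k\ge m}\sigma_k$ are all small. The crux is to choose, for each small $|t|$, an $N=N(|t|)$ for which the telescoping range is valid \emph{and} $\delta_N/|t|$ is small. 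This balancing act is the main obstacle, and is achieved by exploiting the freedom in (10) to pick $\delta_n$ comparable to $\e_n\delta_{n-1}/2$, so that the intervals $\{|t|:c\delta_n<|t|<\delta_n/\e_n\}$ overlap and cover a right neighbourhood of $0$.

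Finally (v) and (vi). For (v), apply Lemma~\ref{lemleq}(i) to $(x_k,e_k)\in D_{m+1}$ for $k\ge m+1$ to obtain $\pairorder{x_{m-1}}{e_{m-1}}{x_k}{e_k}{f_m}{\sigma_{m-1}-\e_m'}$, and let $k\to\infty$; continuity of $f_m$ and the convergence $f_m'(x_k,e_k)\to f_m'(x,e)=f'(x,e)-\langle e,T_m\rangle$ (from (iv)) yield the limiting relation. For (vi), the defining conditions of $D_m$ are now in hand: existence of $f_m'(x,e)$ from (iv), $\|x-x_{m-1}\|<\delta_{m-1}$ by the triangle inequality using (10) and (iii), $x\in\mset_i$ with $i\in[i_{m+1},j_{m+1}]\subseteq(i_{m-1},j_{m-1})$ from the chain argument, and the required ordering relation from (v) with $\e=\e_m'>0$.
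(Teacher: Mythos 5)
Parts (i)--(iii), (v), and (vi) follow the paper's approach closely (you use $\|x_n-x_{n-1}\|<\delta_{n-1}-2\delta_n$ for the Cauchy estimate instead of Lemma~\ref{lemleq}(ii), but that is cosmetic). The problem is (iv), which you flag yourself as "the heart of the proof," and where you diverge from the paper.

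Your plan is to telescope \eqref{eqfnen-1} directly. But \eqref{eqfnen-1} only holds for $|t|<\delta_n/\e_n$, so telescoping from $m$ to $N$ restricts to $|t|<\min_{m<n\le N}\delta_n/\e_n$, while comparing $h_N$ with $G$ costs you an error of order $\delta_N/|t|$, forcing $|t|>C\delta_N$ with $C$ large. You assert this "balancing act" can be won by choosing $\delta_n$ comparable to $\e_n\delta_{n-1}/2$. This has two problems. First, step (10) of Algorithm~\ref{alg} requires $\delta_n$ to be \emph{small enough} that \eqref{eqfnen-1} holds on $|t|<\delta_n/\e_n$; this is a constraint from the local behaviour of the directional derivative at $x_{n-1}$ and $x_n$, over which you have no lower-bound control. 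You are only free to shrink $\delta_n$, not to keep $\delta_n/\e_n$ large, so you cannot impose $\delta_n\approx\e_n\delta_{n-1}/2$ without breaking (10). Second, even granting that choice, the admissible windows $(C\delta_n,\,\delta_n/\e_n)=(C\e_n\delta_{n-1}/2,\,\delta_{n-1}/2)$ and $(C\e_{n+1}\e_n\delta_{n-1}/4,\,\e_n\delta_{n-1}/4)$ overlap only if $C\le 1/2$, whereas your Lipschitz comparison needs $C$ large. So the windows do not cover a neighbourhood of $0$ and the telescoping argument has gaps at scales $|t|\sim\e_n\delta_{n-1}$.

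The paper avoids this entirely by not telescoping \eqref{eqfnen-1} at all. Instead it invokes Lemma~\ref{lemleq}(i): for $n\ge m$ we have $(x_n,e_n)\in D_{m+1}$, hence $\pairorder{x_{m-1}}{e_{m-1}}{x_n}{e_n}{f_m}{\sigma_{m-1}-\e_m'}$, and this $\pairorder$ relation is, by definition \eqref{eqleq}, an inequality valid for \emph{all} $t\in\real$, not just small $t$. The work of splitting into the ranges $|t|<\delta_n/\e_n$ and $|t|\ge\delta_n/\e_n$ is done once and for all inside the proof of Lemma~\ref{lemleq}(i). One then sends $n\to\infty$ for fixed $m$, getting a bound with constant $r_m=K(\sigma_{m-1}-\e_m'+\sqrt{s_m})\to0$ valid for all $t$, passes to $f$ and direction $e$ via $\|f-f_m\|\le2t_m$ and $\|e-e_{m-1}\|\le\sigma_{m-1}$, and only \emph{then} uses the directional derivative at $(x_{m-1},e_{m-1})$ for small $|t|$ -- a single fixed $m$, no multi-scale balancing. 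You already use Lemma~\ref{lemleq}(i) to prove (v); you should use it for (iv) as well, which is essentially what the paper does and which closes the gap.
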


\begin{proof}
Letting $f(x)=f_0(x)+\langle x,\sum_{k\ge0} t_k e_k\rangle$ we deduce
$f_{n} \rightarrow f$ and (i), (ii) from
$f_n(x)=f_0(x)+\langle x,\sum_{k=0}^{n-1} t_k e_k\rangle$,
$\mathrm{Lip}(f_{n}) \leq 1$ and $t_{n+1} \leq t_{n}/2$.

For $n \geq m$, by parts  (ii) and (iii) of Lemma~\ref{lemleq} we have
$(x_{n},e_{n}) \in D_{n+1} \subseteq D_{m+1}$
and $\|e_{n}-e_{m}\| \leq \sigma_{m}$.
The former implies
$\|x_{n}-x_{m}\| < \delta_{m}$ by the definition of
$D_{m+1}$.
As $\delta_{m}$ and $\sigma_m$ tend to $0$, the sequences $(x_{n})$ and $(e_{n})$
are Cauchy so that they converge to some $x \in H$ and $e \in S(H)$ respectively.
Taking the $n \to \infty$ limit we obtain $\|x-x_{m}\| \leq \delta_{m}$
and $\|e-e_{m}\| \leq \sigma_{m}$. The former implies
$x \in \overline{B(x_{m},\delta_{m})} \subseteq
B(x_{m-1},\delta_{m-1})$
for all $m\ge1$, using \eqref{ballsub}.

To complete (iii), note that from (8) of Algorithm~\ref{alg} we have $x_{n} \in \mset_{i_{n}} \subseteq \mset_{i}$ for all $n$, as $i_{n} \preceq i$. Now $x_{n} \rightarrow x$ and $\mset_{i}$ is closed so that $x \in \mset_{i}$.

We now show that the directional derivative derivative $f'(x,e)$ exists.

For $n\geq m$ we have $(x_n,e_n)\in D_{m+1}$; therefore
by part (i) of
Lemma~\ref{lemleq} we know
\begin{equation}\label{uniformeps}
\pairorder {x_{m-1}}{e_{m-1}}{x_n}{e_n}{f_m}{\sigma_{m-1}-\e'_m}.
\end{equation}
Now the sequence
$\left(f_{n}'(x_{n},e_{n})\right)$ is strictly increasing and is
non-negative as $f_{0}'(x_{0},e_{0}) \geq 0$ and $f_{n}'(x_{n},e_{n})
< f_{n+1}'(x_{n},e_{n}) \leq f_{n+1}'(x_{n+1},e_{n+1})$.
It is bounded above by $\mathrm{Lip}(f_{n}) \leq 1$ so that it
converges to some $L \in (0,1]$.
As $\|f-f_{n}\| \rightarrow 0$ we also have $f'(x_{n},e_{n}) \rightarrow L$ and $f_{n+1}'(x_{n},e_{n}) \rightarrow L$. Note then that for each fixed $m$,
$$
f_{m}'(x_{n},e_{n})-f_{m}'(x_{m-1},e_{m-1})
\xrightarrow[n\to\infty]{}
s_{m},
$$
where
\begin{equation}\label{defsm}
s_{m}
=
(f_{m}-f)(e) +
L - f_{m}'(x_{m-1},e_{m-1})
\xrightarrow[m\to\infty]{} 0.
\end{equation}
As $f_{m}'(x_{n},e_{n}) \geq f_{m}'(x_{m-1},e_{m-1})$ from \eqref{uniformeps} we have
$s_{m} \geq 0$ for each $m$.
Taking $n \rightarrow \infty$ in \eqref{uniformeps} we thus obtain
\begin{equation}\label{eqfm}
|(f_{m}(x+te_{m-1})-f_{m}(x))-(f_{m}(x_{m-1}+te_{m-1})-f_{m}(x_{m-1}))|
\leq r_{m}|t|
\end{equation}
for any $t \in \real$,
where
\begin{equation}
\label{defrm}
r_{m}= K(\sigma_{m-1} - \e_{m}' + \sqrt{s_{m}})\rightarrow 0.
\end{equation}
Using $\|f-f_m\| \leq 2t_m$, $\|e-e_{m-1}\| \leq \sigma_{m-1}$ and $\mathrm{Lip}(f) \leq 1$:
\begin{equation}\label{eqfmmod}
|(f(x+te)-f(x))-(f_{m}(x_{m-1}+te_{m-1})-f_{m}(x_{m-1}))| \leq (r_m+2t_m+\sigma_{m-1})|t|.
\end{equation}
Let $\e > 0$. Pick $m$ such that
\begin{equation}\label{mlarge}
r_m+2t_m+\sigma_{m-1} \leq \e/3 \textrm{ and }|f_{m}'(x_{m-1},e_{m-1})-L| \leq \e/3
\end{equation}
and $\delta > 0$ with
\begin{equation}\label{dirderivatm}
|f_{m}(x_{m-1}+te_{m-1})-f_{m}(x_{m-1})-f_{m}'(x_{m-1},e_{m-1})t| \leq \e |t|/3
\end{equation}
for all $t$ with $|t| \leq \delta$. Combining \eqref{eqfmmod},
\eqref{mlarge} and \eqref{dirderivatm} we obtain
\begin{equation*}
|f(x+te)-f(x)-Lt| \leq \e |t|
\end{equation*}
if $|t| \leq \delta$. Hence the directional derivative
$f'(x,e)$ exists and equals $L$.
As $L > 0$
and
$f_{n}'(x_{n},e_{n})$ is an increasing sequence that tends to $L$,
we get (iv).

Note further that, as $f_m-f$ is linear, the directional derivative $f_m'(x,e)$ also exists and equals $(f_m-f)(e)+L$. Hence from \eqref{defsm}
\begin{equation*}
s_{m} = f_{m}'(x,e)-f_{m}'(x_{m-1},e_{m-1}).
\end{equation*}
As $s_{m} \geq 0$ for all $m$, we conclude that $f_{m}'(x,e) \geq
f_{m}'(x_{m-1},e_{m-1})$ for all $m$.
Further from \eqref{eqfm} and \eqref{defrm},
\begin{align*}
|(f_{m}(x&+te_{m-1})-f_{m}(x))-(f_{m}(x_{m-1}+te_{m-1})-f_{m}(x_{m-1}))|
\\
&\leq
K\Bigl(\sigma_{m-1} - \e_{m}' +
\sqrt{f_{m}'(x,e)-f_{m}'(x_{m-1},e_{m-1})}\Bigr)|t|
\end{align*}
for any $t$. Hence
$$
\pairorder {x_{m-1}}{e_{m-1}}{x}{e}{f_m}{\sigma_{m-1}-\e'_m}.
$$

This establishes (v). Finally (vi) follows immediately from
(iii), (iv), (v) and the fact $i \in (i_{m},j_{m})$.
\end{proof}

\noindent\textit{Proof of Theorem~\ref{thincr}.}
From Lemma~\ref{limitproperties}~(i)--(ii)
the Lipschitz function
$f\colon H\to\real$ is such that $f-f_0$ is linear and
$\|f-f_0\|\le 2t_0\le\mu$.
Recall that $i \in (i_{m},j_{m})$ for all $m$;
in particular $i \in (i_{0},j_{0})$.
By parts (iii) and (iv) of Lemma~\ref{limitproperties} we see that
$(x,e) \in D^{f}_{i}$ and $f'(x,e)>0$.

We are left needing to verify that
the directional derivative $f'(x,e)$ is almost locally maximal in the sense of Theorem~\ref{thincr}.
\begin{lemma}
If $\e > 0$ then there exists
$\delta_{\e} > 0$ and $j_{\e} \in (i,j_{0})$ such that whenever
\begin{equation*}
\pairorder {x}{e}{x'}{e'}{f}{0}
\end{equation*}
with $\|x'-x\| \leq \delta_{\e}$ and $x' \in
\mset_{j_{\e}}$, we have $f'(x',e') < f'(x,e) + \e$.
\end{lemma}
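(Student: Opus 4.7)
The plan is to prove the lemma by placing $(x', e')$ into $D_{n+1}$ for sufficiently large $n$ and invoking the almost maximality of $(x_{n+1}, e_{n+1})$ given by step (6) of Algorithm~\ref{alg}. Specifically, fix $n$ large enough that $\lambda_{n+1} + 2t_{n+1} < \e$; using density of $(\setseq, \preceq)$ and the fact that $i \in (i_m, j_m)$ for every $m$, pick $j_\e$ with $i \prec j_\e \prec j_n$; and choose $\delta_\e \in (0, \delta_n - \|x - x_n\|)$, which is positive by Lemma~\ref{limitproperties}(iii).

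Given $(x', e')$ satisfying the hypothesis, the three conditions of Algorithm step (5) for $(x', e') \in D_{n+1}$ must be verified. The first, that $x' \in \mset_{i'}$ for some $i' \in (i_n, j_n)$, holds with $i' = j_\e$ since $i_n \prec i \prec j_\e \prec j_n$. The second, $\|x' - x_n\| < \delta_n$, follows by the triangle inequality and the choice of $\delta_\e$. The third---existence of $\e'' > 0$ with $\pairorder{x_n}{e_n}{x'}{e'}{f_{n+1}}{\sigma_n - \e''}$---is the heart of the argument: combine Lemma~\ref{limitproperties}(v) at $m = n+1$, namely $\pairorder{x_n}{e_n}{x}{e}{f_{n+1}}{\sigma_n - \e'_{n+1}}$, with the hypothesis $\pairorder{x}{e}{x'}{e'}{f}{0}$ through the intermediate pair $(x, e_n)$. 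Substituting the direction $e$ for $e_n$ introduces an error of at most $\sigma_n|t|$ (using $\|e - e_n\| \leq \sigma_n$ from Lemma~\ref{limitproperties}(iii) together with $\mathrm{Lip}(f_0) \leq 1/2$), and linearity of $f - f_{n+1}$ equates the resulting cross-differences; the bounds $|(f - f_{n+1})(e' - e)| \leq 4t_{n+1}$ and $f_{n+1}'(x, e) - f_{n+1}'(x_n, e_n) \to 0$ from Lemma~\ref{limitproperties}(iv) then relate $\sqrt{f'(x', e') - f'(x, e)}$ to $\sqrt{f_{n+1}'(x', e') - f_{n+1}'(x_n, e_n)}$.

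Once $(x', e') \in D_{n+1}$ has been established, step (6) of Algorithm~\ref{alg} gives $f_{n+1}'(x', e') \leq f_{n+1}'(x_{n+1}, e_{n+1}) + \lambda_{n+1} \leq f'(x, e) + \lambda_{n+1}$ using Lemma~\ref{limitproperties}(iv); combined with $f'(x', e') \leq f_{n+1}'(x', e') + 2t_{n+1}$ (from $\|f - f_{n+1}\| \leq 2t_{n+1}$ and Lemma~\ref{limitproperties}(ii)), this yields $f'(x', e') \leq f'(x, e) + \lambda_{n+1} + 2t_{n+1} < f'(x, e) + \e$, as required.

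The main obstacle is establishing the third condition for $D_{n+1}$: after decomposing the cross-difference through $(x, e_n)$ and applying Lemma~\ref{limitproperties}(v) and the hypothesis, one obtains contributions bounded by $K(\sigma_n - \e'_{n+1} + \sqrt{A})|t|$ (with $A = f_{n+1}'(x, e) - f_{n+1}'(x_n, e_n)$) and $K\sqrt{B}|t| + \sigma_n|t|$ (with $B = f'(x', e') - f'(x, e)$). Absorbing these into the target $K(\sigma_n - \e'' + \sqrt{Z})|t|$, where $Z = f_{n+1}'(x', e') - f_{n+1}'(x_n, e_n)$, with some positive $\e''$ requires the carefully chosen decay of the Algorithm constants and exploits that $A$, $t_{n+1}$, and $|\sqrt{B} - \sqrt{Z}|$ all vanish as $n \to \infty$.
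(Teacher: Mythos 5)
Your proposal follows the same broad strategy as the paper (show $(x',e')$ lies in one of the sets $D_m$ and then invoke near-maximality from step~(6) of Algorithm~\ref{alg}; whether you aim at $D_{n+1}$ or $D_n$ is only an index shift). However, the key step is missing and the argument as written does not close.

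The paper's proof is by \emph{contradiction}: it assumes $f'(x',e') \geq f'(x,e)+\e$ and only then establishes membership in $D_n$. That extra assumption is not cosmetic. With your notation $B = f'(x',e')-f'(x,e)$ and $Z = f_{n+1}'(x',e')-f_{n+1}'(x_n,e_n)$, you need to control $\sqrt B - \sqrt Z$, and you assert that ``$|\sqrt B - \sqrt Z|$ vanishes as $n\to\infty$.'' That is false: only the \emph{difference} $B-Z = (f-f_{n+1})(e') + (f'(x,e)-f_{n+1}'(x_n,e_n))$ is small, and a small $B-Z$ does not give a small $\sqrt B - \sqrt Z$ when $B$ and $Z$ are themselves near zero (e.g.\ $Z=0$, $B=\eta$ gives $\sqrt B - \sqrt Z = \sqrt\eta$). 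If $B<\e$ the conclusion of the lemma is immediate, and if $B\geq\e$ then $\sqrt B \geq \sqrt\e$ and one can write $\sqrt B - \sqrt Z \leq (B-Z)/\sqrt B \leq (B-Z)/\sqrt\e$, which \emph{is} small — this is exactly the $\sqrt A - \sqrt B \leq (A-B)/\sqrt\e$ step in the paper and is the purpose of the hypothesis $n\geq 4/\sqrt\e$ combined with $t_n \leq \sigma_{n-1}/(4n)$. Without explicitly assuming $f'(x',e')\geq f'(x,e)+\e$ (or, equivalently, reducing to the case $B\geq\e$), there is no way to verify the square-root inequality in the definition of $\underset{(f_{n+1},\cdot)}{\le}$, and also no way to see that $f_{n+1}'(x',e')\geq f_{n+1}'(x_n,e_n)$, which is another requirement for membership in $D_{n+1}$.

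A second, smaller omission: your choice of $\delta_\e$ only enforces $\|x'-x_n\|<\delta_n$. The paper additionally requires $\delta_\e$ to be small enough that a sharpened pointwise estimate analogous to \eqref{defdeleps} holds for $|t|<\delta_\e/\e_n'$, and then splits into the cases $|t|<\delta_\e/\e_n'$ and $|t|\geq\delta_\e/\e_n'$ (using $2\|x-x'\|\leq K\e_n'|t|/2$ in the second case). Without this case split and the extra constraint on $\delta_\e$, the direction-change and cross-difference errors cannot be absorbed into a strictly positive $\e''$. Also, a minor slip: the direction-change error $\|e-e_n\|\cdot|t|$ should be controlled by $\mathrm{Lip}(f_{n+1})\leq 1$ (or $\mathrm{Lip}(f)\leq 1$), not by $\mathrm{Lip}(f_0)\leq 1/2$.
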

\begin{proof}
Pick $n$ such that
\begin{equation}\label{lambdantn}
n \geq 4/\sqrt{\e}
\textrm{ and }
\lambda_{n},t_{n} \leq
\e/4.
\end{equation}
Let $j_{\e} = j_{n} \in (i,j_{0})$. Find
$\delta_{\e} > 0$ such that
\begin{equation}\label{delta}
\delta_{\e} < \delta_{n-1}-\|x-x_{n-1}\|
\end{equation}
and
\begin{align}\label{defdeleps}
|(f_{n}(x+te)&-f_{n}(x))-(f_{n}(x_{n-1}+te_{n-1})-f_{n}(x_{n-1}))|
\\
& \leq (f_{n}'(x,e)-f_{n}'(x_{n-1},e_{n-1}) + \sigma_{n-1}) |t|\notag
\end{align}
for all $t$ with $|t| < \delta_{\e}/\e_{n}'$, where $\e_n'$ is given
by \eqref{defepsn'}.
Lemma~\ref{limitproperties}~(iii) and the fact that
$f_{n}'(x,e)-f_{n}'(x_{n-1},e_{n-1}) \geq 0$ from
Lemma~\ref{limitproperties}~(v) guarantee the existence of such
$\delta_\e$.

Now suppose that
\begin{equation}\label{choicex'}
\begin{cases}
\pairorder xe{x'}{e'}f0,\\
\|x'-x\| \leq \delta_{\e}\textrm{ and }x' \in \mset_{j_{\e}},
\\
f'(x',e') \geq f'(x,e) + \e.
\end{cases}
\end{equation}
We aim to show that $(x',e') \in D_{n}$. That will lead to a
contradiction
since, together with (6)
in Algorithm~\ref{alg} and Lemma~\ref{limitproperties}~(iv), this
would imply
$$
f_{n}'(x',e') \leq f_{n}'(x_{n},e_{n}) +
\lambda_{n} \leq  f'(x,e) + \lambda_{n}
$$
so that
\begin{equation*}
f'(x',e') \leq f'(x,e) + \lambda_n + 2t_n,
\end{equation*}
by Lemma~\ref{limitproperties}~(ii). This contradicts \eqref{lambdantn} and \eqref{choicex'}.

Since \eqref{delta} and \eqref{choicex'}
imply $x' \in
B(x_{n-1},\delta_{n-1})$ and $x' \in \mset_{j_{\e}}$ with $j_{\e} =j_{n}\in
 (i_{n-1},j_{n-1})$, to prove $(x',e') \in
D_{n}$ it is enough to show that
\begin{equation}\label{eqlast}
\pairorder {x_{n-1}}{e_{n-1}}{x'}{e'}{f_{n}}{\sigma_{n-1}-\e_{n}'/2};
\end{equation}
see (5)
in Algorithm~\ref{alg}.

First, note that $f_{n}'(x',e')-f_{n}'(x,e) \geq
f'(x',e')-f'(x,e)-2\|f_{n}-f\| \geq \e - 4t_{n} \geq 0$, so that $f_{n}'(x',e') \geq f_{n}'(x,e) \geq
f_{n}'(x_{n-1},e_{n-1})$.

Let $A = f'(x',e')-f'(x,e)$ and $B =
f_{n}'(x',e')-f_{n}'(x,e)$. We have $A \geq \e$ and $B \geq 0$;
therefore by (3)
of
Algorithm~\ref{alg}, Lemma~\ref{limitproperties} (ii) and
\eqref{lambdantn}
$$
\sqrt{A}-\sqrt{B}
\leq
\frac{A-B}{\sqrt{\e}}
=
\frac{(f-f_{n})(e'-e)}{\sqrt{\e}}
\leq
\frac{4t_{n}}{\sqrt{\e}}
\leq
nt_{n}
\leq
\sigma_{n-1}/4.
$$
Further, let $C = f_{n}'(x',e')-f_{n}'(x_{n-1},e_{n-1})$.
Since $f_{n}'(x_{n-1},e_{n-1}) \leq f_{n}'(x,e)$ and the Lipschitz
constant $\text{Lip}(f_n)$ does not exceed $1$, we have
$0 \le B \leq C \leq 2$, so that
$$
K\sqrt{C}-K\sqrt{B}
\geq
C-B
= f_{n}'(x,e)-f_{n}'(x_{n-1},e_{n-1})
$$
as $K \geq 4$. Hence
\begin{align}\label{fne'}
(f_{n}'(x,e)&-f_{n}'(x_{n-1},e_{n-1})) + K\sqrt{f'(x',e')-f'(x,e)}\notag
\\
&\leq
K\sqrt{C}-K\sqrt{B} + K(\sqrt{B}+\sigma_{n-1}/4)
\\ \notag
&=
K (\sqrt{f_{n}'(x',e')-f_{n}'(x_{n-1},e_{n-1})} + \sigma_{n-1}/4).
\end{align}

In order to check \eqref{eqlast}, we need to obtain an upper estimate
for
\begin{equation}\label{eqe'}
|(f_{n}(x'+te_{n-1})-f_{n}(x'))-(f_{n}(x_{n-1}+te_{n-1})-f_{n}(x_{n-1}))|.
\end{equation}
If $|t| < \delta_{\e}/\e_{n}'$, we can use
\begin{align*}
|(f_{n}(&x'+te)-f_{n}(x'))-(f_{n}(x+te)-f_{n}(x))|\\
&= |(f(x'+te)-f(x'))-(f(x+te)-f(x))| \le
K\sqrt{f'(x',e')-f'(x,e)}|t|
\end{align*}
and \eqref{defdeleps}
to deduce that \eqref{eqe'} is no greater than
\begin{multline*}
(f_{n}'(x,e)-f_{n}'(x_{n-1},e_{n-1})+\sigma_{n-1})|t|\\
+
K\sqrt{f'(x',e')-f'(x,e)}|t|
+
\|e-e_{n-1}\|\cdot|t|
\end{multline*}
since $\text{Lip}(f_n) \leq 1$. Using \eqref{fne'}, $\|e-e_{n-1}\| \leq \sigma_{n-1}$, $\e_n' \leq \sigma_{n-1}/4$ and $K\ge4$
we get that the latter does not exceed
$$
K \left(\sigma_{n-1} - \e_{n}'/2 +
\sqrt{f_{n}'(x',e')-f_{n}'(x_{n-1},e_{n-1})}\right)|t|.
$$
On the other hand, for $|t| \geq \delta_{\e}/\e_{n}'$ we have
$2\|x-x'\| \leq 2\e_{n}'|t| \leq K\e_{n}'|t|/2$ so,
using this together with Lemma~\ref{limitproperties}~(v), $\text{Lip}(f_n) \leq 1$ and $f_{n}'(x,e) \leq f_{n}'(x',e')$, we get
\begin{align*}
|(f_{n}&(x'+te_{n-1})-f_{n}(x'))-(f_{n}(x_{n-1}+te_{n-1})-f_{n}(x_{n-1}))|
\\
&\leq
2\|x'-x\|
+
K\Bigl(\sigma_{n-1} - \e_{n}' +
\sqrt{f_{n}'(x,e)-f_{n}'(x_{n-1},e_{n-1})}\Bigr)|t|
\\
&\leq
K\Bigl(\sigma_{n-1} - \e_{n}'/2 +
\sqrt{f_{n}'(x',e')-f_{n}'(x_{n-1},e_{n-1})}\Bigr)|t|.
\end{align*}
Hence
$$
\pairorder {x_{n-1}}{e_{n-1}}{x'}{e'}{f_n}{\sigma_{n-1}-\e_{n}'/2}
$$
and we are done.
\end{proof}

This finishes the proof of Theorem~\ref{thincr}.

\section{A differentiability lemma}
\label{sec4}

As in the previous section, we shall mostly work on a real Hilbert space $H$,
though our eventual application will only use the case in which $H$ is
finite dimensional. Lemma~\ref{lemmax} is proved in general
real Banach space $X$.
Given $x,y$ in a linear space we use $[x,y]$ to denote the
closed line segment with endpoints $x$ and $y$.

We start by quoting Lemma~\ref{lempreiss}, which is \cite[Lemma 3.4]{P}.
This lemma can be understood as an improvement of the standard mean
value theorem applied to the function
$$
h(t)=\phi(t) - t\frac{\psi(s) - \psi(-s)}{2s} - \frac{\psi(s) +
  \psi(-s)}{2}.
$$
Roughly speaking, this ``generalised'' mean value theorem says that if
$h(s)=h(-s)=0$ and $h(\xi)\ne0$
then there is a
point $\tau\in[-s,s]$ such that the
derivative $h'(\tau)$ is bounded away from zero
by a term proportional to $|h(\xi)|/s$ and \eqref{phipsilempreiss}
holds. The latter inequality
essentially comes from the upper
bound for the slope $|h(\tau+t)-h(\tau)|/|t|$ by $(\mathbb M h')(\tau)$, where $\mathbb M$ is the Hardy-Littlewood
maximal operator.

We use this statement in order to show in Lemma~\ref{lemmax}  and Lemma~\ref{lemdiff} that if $f'(x,e)$ exists and is
maximal up to $\e$ among all directional derivatives of $f$ satisfying~\eqref{xx'}, at points in a $\delta_{\e}$-neighbourhood of $x$,
then $f$ is
\frechet{} differentiable at $x$. Lemma~\ref{lemmax}, which follows from
Lemma~\ref{lempreiss}, guarantees that if there is a direction $u$ in which $f(x+ru)-f(x)$ is not well approximated by $f'(x,e)\langle u,e\rangle$ then
we can find
a nearby point and direction $(x',e')$, satisfying the constraint~\eqref{xx'}, at which the directional derivative $f'(x',e')$ is at least as large as $f'(x,e)+\e$, a contradiction.

\begin{lemma}\label{lempreiss}
Suppose that $|\xi| < s < \rho$, $0 < \nu < \frac{1}{32}$, $\sigma >
0$ and $L > 0$ are real numbers and that $\phi$ and $\psi$ are
Lipschitz functions defined on the real line such that
$\mathrm{Lip}(\phi) + \mathrm{Lip}(\psi) \leq L$, $\phi(t) = \psi(t)$ for
$|t| \geq s$ and $\phi(\xi) \neq \psi(\xi)$. Suppose, moreover, that
$\psi'(0)$ exists and that
$$
|\psi(t)-\psi(0)-t\psi'(0)| \leq \sigma L |t|
$$
whenever $|t| \leq \rho$,
$$
\rho \geq s\sqrt{(sL)/(\nu|\phi(\xi)-\psi(\xi)|)},
$$
and
$$
\sigma \leq \nu^{3}\left(\frac{\phi(\xi)-\psi(\xi)}{sL}\right)^{2}.
$$
Then there is a $\tau \in (-s,s)\setminus\{\xi\}$
such that $\phi'(\tau)$ exists,
$$
\phi'(\tau) \geq \psi'(0) + \nu|\phi(\xi)-\psi(\xi)|/s,
$$
and
\begin{equation}\label{phipsilempreiss}
|(\phi(\tau + t)-\phi(\tau))-(\psi(t)-\psi(0))|
\leq
4(1+20\nu)\sqrt{[\phi'(\tau)-\psi'(0)]L}|t|
\end{equation}
for every $t\in\real$.
\end{lemma}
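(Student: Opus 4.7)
Since Lemma~\ref{lempreiss} is quoted verbatim from \cite[Lemma 3.4]{P}, what follows is a proposal for how one would reprove it from scratch, in the spirit of Preiss's original argument. The plan is to reduce the statement to a maximum principle for a carefully chosen auxiliary Lipschitz function on the real line, and then read off both the derivative lower bound and the approximation inequality \eqref{phipsilempreiss} from the behaviour of this function at its maximum.

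First I would normalise: set $\Delta = \phi(\xi) - \psi(\xi)$ and, by negating both $\phi$ and $\psi$ if necessary, assume $\Delta > 0$. Let $\alpha = \nu \Delta / s$; this is the quantity that must appear as a lower bound on $\phi'(\tau) - \psi'(0)$. Introduce the auxiliary function
$$
F(t) = \phi(t) - \psi(0) - \psi'(0)\, t - \alpha\, |t - \xi|
$$
and consider its restriction to $[-s, s]$. Using the Taylor-type hypothesis $|\psi(t) - \psi(0) - t\psi'(0)| \leq \sigma L |t|$ on $[-\rho, \rho] \supseteq [-s, s]$, the smallness of $\sigma$ (in terms of $(\Delta / (sL))^2$) forces $F(\xi)$ to be essentially equal to $\Delta > 0$, while $F$ evaluated at either endpoint $\pm s$—where $\phi = \psi$—is at most of the order of $\sigma L s$, hence strictly less than $F(\xi)$. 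Therefore the supremum of $F$ over $[-s, s]$ is attained at some interior point.

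Next I would argue that this supremum is attained at some $\tau \in (-s, s) \setminus \{\xi\}$ at which $\phi'(\tau)$ exists. Because $\phi$ is Lipschitz, its set of points of differentiability has full Lebesgue measure, so by a standard perturbation argument one can arrange $\tau$ so that $\phi'(\tau)$ is defined and $F(\tau)$ is still arbitrarily close to the supremum. The first-order condition at the maximum—using that $\alpha|t - \xi|$ is smooth away from $\xi$—gives $\phi'(\tau) = \psi'(0) + \alpha\, \mathrm{sgn}(\tau - \xi)$; the side of $\xi$ on which $F$ actually attains its supremum forces the sign $+$, producing $\phi'(\tau) \geq \psi'(0) + \nu \Delta / s$, which is the first required inequality.

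The main obstacle is then the quantitative approximation bound \eqref{phipsilempreiss}. For this I would exploit the inequality $F(\tau + t) \leq F(\tau)$ for all $t \in \real$, which after rearrangement reads
$$
\phi(\tau + t) - \phi(\tau) \leq \psi'(0)\, t + \alpha\bigl(|\tau - \xi + t| - |\tau - \xi|\bigr),
$$
and combine it with the analogous lower bound obtained by applying the same maximum principle on the opposite side, the Taylor estimate for $\psi$ on $[-\rho, \rho]$, the Lipschitz bound $\mathrm{Lip}(\phi) + \mathrm{Lip}(\psi) \leq L$, and the identity $\phi = \psi$ outside $[-s, s]$. The coefficient $4(1 + 20\nu)$ emerges from carefully balancing three error contributions: the $\sigma L$ coming from the Taylor remainder of $\psi$, the $\alpha = \nu \Delta / s$ penalty from the auxiliary function, and the Lipschitz correction when $|t|$ exceeds $\rho$. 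The two quantitative hypotheses $\rho \geq s\sqrt{sL/(\nu \Delta)}$ and $\sigma \leq \nu^{3}(\Delta / (sL))^{2}$ are calibrated exactly so that each of these errors is dominated by a small multiple of $\sqrt{[\phi'(\tau) - \psi'(0)] L}\,|t|$. I expect the detailed book-keeping in the case split $|t| \leq \rho$ versus $|t| > \rho$—together with the subcase of $\tau + t$ inside or outside $[-s, s]$—to be the most tedious technical point, but no genuinely new idea is needed beyond the auxiliary function $F$ and the extremal role of $\tau$.
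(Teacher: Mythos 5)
Note first that the paper does not prove Lemma~\ref{lempreiss}: it is quoted verbatim from \cite{P} (Lemma~3.4 there), so there is no in-paper proof to compare your sketch against. The assessment below is therefore a review of the sketch on its own merits.

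Your auxiliary function has the wrong shape for the job. Because the penalty $-\alpha|t-\xi|$ introduces a \emph{downward} kink at $t=\xi$, the point $\xi$ is a natural candidate for the maximiser of $F$ on $[-s,s]$, and it can genuinely be the unique one: take $\psi$ constant (so $\psi'(0)=0$) and $\phi$ a tent function peaking at $\xi$ with $\phi(\pm s)=\psi(\pm s)$; then $F$ is maximised exactly at $\xi$, while the lemma's conclusion is satisfied by any $\tau$ on the rising flank of the tent — points your procedure never selects. Perturbing the maximiser off $\xi$ does not restore the first-order condition $F'(\tau)=0$, and even when the maximum \emph{is} attained at some $\tau\neq\xi$ the stationarity condition you write, $\phi'(\tau)=\psi'(0)+\alpha\,\mathrm{sgn}(\tau-\xi)$, has the \emph{wrong} sign whenever $\tau<\xi$; the assertion that ``the side of $\xi$ forces the sign $+$'' is stated without argument and is false in general. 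Indeed no consequence of the max condition alone yields $\phi'(\tau)\ge\psi'(0)+\alpha$: at a maximiser the one-sided slopes of $F$ sandwich zero, giving only $\phi'(\tau^-)\ge\psi'(0)-\alpha$ and $\phi'(\tau^+)\le\psi'(0)+\alpha$. A one-sided linear tilt (e.g.\ $-\alpha(t+s)$ in place of $-\alpha|t-\xi|$) is what forces the correct sign, and Preiss's actual argument is built around something of that nature rather than an absolute-value penalty.

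There is a second genuine gap in the derivation of \eqref{phipsilempreiss}. Maximality of $F$ at $\tau$ gives only the one-sided inequality $\phi(\tau+t)-\phi(\tau)\le\psi'(0)t+\alpha\bigl(|\tau+t-\xi|-|\tau-\xi|\bigr)$, which after combination with the Taylor estimate for $\psi$ controls the difference in \eqref{phipsilempreiss} from one side only. The phrase ``the analogous lower bound obtained by applying the same maximum principle on the opposite side'' has no identifiable meaning in your setup — there is a single extremal point and no second maximum principle to invoke — so the two-sided bound remains unproved. You would also need a separate mechanism for $|t|>\rho$, where the Taylor estimate is unavailable and the crude Lipschitz bound $L|t|$ is far larger than the target $4(1+20\nu)\sqrt{[\phi'(\tau)-\psi'(0)]L}\,|t|$, since $\phi'(\tau)-\psi'(0)$ is only guaranteed to be of size $\alpha\le 2\nu L\ll L$. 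These are not bookkeeping issues; the sketch is missing the ideas that make the lemma work.
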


\begin{lemma}\label{lemmax}
Let $(X,\|\cdot\|)$ be a real Banach space,
$f\colon  X \rightarrow \mathbb{R}$ be a Lipschitz function with Lipschitz
constant $\mathrm{Lip}(f) > 0$ and let $\e\in(0,\mathrm{Lip}(f)/9)$.
Suppose $x\in X$, $e\in S(X)$ and $s > 0$ are such that
the directional derivative $f'(x,e)$ exists, is non-negative and
\begin{equation}
\label{eq80lip}
|f(x+te)-f(x)-f'(x,e)t| \leq \frac{\e^{2}}{160\mathrm{Lip}(f)} |t|
\end{equation}
for $|t| \leq s\sqrt{\frac{2\mathrm{Lip}(f)}{\e}}$.
Suppose further $\xi\in(-s/2,s/2)$
and $\lambda \in X$ satisfy
\begin{align}
\label{est160}&|f(x+\lambda)-f(x+\xi e)| \geq 240\e s,\\
\label{lambdaxi}&\|\lambda-\xi e\| \leq s\sqrt{\frac{\e}{\mathrm{Lip}(f)} }\\
\label{estup}\text{and \quad}&\frac{\|\pi se+\lambda\|}{|\pi s+\xi|} \leq 1 + \frac{\e}{4\mathrm{Lip}(f)}
\end{align}
for $\pi = \pm 1$. Then if $s_{1},s_{2},\lambda' \in X$ are such that
\begin{equation}\label{ests1s2}
\max(\|s_{1}-se\|,\|s_{2}-se\|) \leq \frac{\e^{2}}{320\mathrm{Lip}(f)^{2}}s
\end{equation}
and
\begin{equation}\label{estlam}
\|\lambda'-\lambda\| \leq \frac{\e s}{16\mathrm{Lip}(f)},
\end{equation}
we can find
$x' \in [x-s_1,x+\lambda'] \cup [x+\lambda',x+s_2]$ and $e' \in S(X)$ such
that the directional derivative $f'(x',e')$ exists,
\begin{equation}\label{derbound}
f'(x',e') \geq f'(x,e) + \e
\end{equation}
and for all $t \in \mathbb{R}$ we have
\begin{align}\label{estff'}
&|(f(x'+te)-f(x'))-(f(x+te)-f(x))|\\
&\notag\leq
25\sqrt{(f'(x',e')-f'(x,e))\mathrm{Lip}(f)}|t|.
\end{align}
\end{lemma}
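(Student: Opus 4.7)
The plan is to apply Preiss's mean-value lemma, Lemma~\ref{lempreiss}, to a pair of Lipschitz functions $\psi, \phi\colon \real \to \real$ constructed from $f$. Set $\psi(t) = f(x + te)$, so $\psi'(0) = f'(x,e)$ and the deviation hypothesis of Lemma~\ref{lempreiss} follows from \eqref{eq80lip} with $\rho = s\sqrt{2\mathrm{Lip}(f)/\e}$. For $\phi$, parametrize the broken path $[x - s_1, x + \lambda'] \cup [x + \lambda', x + s_2]$ by a piecewise-linear map $p\colon [-s,s] \to H$ with $p(-s) = x - s_1$, $p(\xi) = x + \lambda'$, $p(s) = x + s_2$, and set $\phi(t) = f(p(t))$. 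To match $\phi$ to $\psi$ outside a slightly larger interval $[-s^*, s^*]$, extend $p$ by short linear connectors on $[-s^*, -s] \cup [s, s^*]$ joining $p$ to $x \mp s^* e$, taking $\eta := s^* - s = \e s/\mathrm{Lip}(f)$; beyond $\pm s^*$, declare $\phi(t) = \psi(t)$.

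Using \eqref{estup}, \eqref{lambdaxi}, \eqref{ests1s2} and \eqref{estlam}, the speed of $p$ on each main segment is at most $1 + \e/(4\mathrm{Lip}(f))$, and, by the choice of $\eta$, the speed on the connectors is $1 + O(\e/\mathrm{Lip}(f))$ as well. Hence $L := \mathrm{Lip}(\phi) + \mathrm{Lip}(\psi) \leq 2\mathrm{Lip}(f) + \e/2$. From \eqref{est160} and \eqref{estlam}, $|\phi(\xi) - \psi(\xi)| = |f(x + \lambda') - f(x + \xi e)| \geq 200\e s$. Choose $\nu = 1/100 \in (0, 1/32)$; the two size hypotheses of Lemma~\ref{lempreiss} then reduce to concrete numerical inequalities that hold under the assumption $\e \leq \mathrm{Lip}(f)/9$.

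Applying Lemma~\ref{lempreiss} with $s^*$ in place of $s$ yields $\tau \in (-s^*, s^*) \setminus \{\xi\}$ with $\phi'(\tau)$ existing, $\phi'(\tau) \geq \psi'(0) + \nu|\phi(\xi) - \psi(\xi)|/s^* \geq f'(x,e) + 16\e/9$, together with \eqref{phipsilempreiss}. The critical step is showing $\tau \in (-s, s)$: on each connector $\phi$ is linear with slope $[f(x + s^* e) - f(x + s_2)]/\eta$ (and analogously at $-s$); applying \eqref{eq80lip} at $t = s$ and $t = s^*$ and combining with \eqref{ests1s2} shows this slope equals $f'(x,e) + O(\e^2 s/(\mathrm{Lip}(f)\eta))$, which the choice $\eta = \e s/\mathrm{Lip}(f)$ drives well below $f'(x,e) + 16\e/9$. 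Hence $\tau$ cannot lie in a connector. Set $x' = p(\tau)$, on one of $[x - s_1, x + \lambda']$ or $[x + \lambda', x + s_2]$, and let $e'$ be the unit direction of that segment. Then $\phi'(\tau) = \mu f'(x', e')$ with speed $\mu \in [1, 1 + \e/(4\mathrm{Lip}(f))]$, so $f'(x', e')$ exists and $f'(x',e') \geq \phi'(\tau)/\mu \geq f'(x,e) + \e$, giving \eqref{derbound}.

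Finally, \eqref{estff'} follows from \eqref{phipsilempreiss} by substituting $p(\tau + t) - p(\tau) = \mu t e'$ (for $t$ keeping $\tau + t$ on the same segment), converting between the direction $e'$ and $e$ using $\|e' - e\| = O(\sqrt{\e/\mathrm{Lip}(f)})$ (derivable from \eqref{lambdaxi} and \eqref{estlam}), and extending to all $t \in \real$ via the Lipschitz property of $f$. All accumulated factors fit under the universal constant $25$. The principal obstacle is engineering the connector so that the $\tau$ produced by Preiss's lemma must land in the main interval; this is achieved by the choice $\eta = \e s/\mathrm{Lip}(f)$, large enough to force the connector slope below the guaranteed lower bound on $\phi'(\tau)$, yet small enough that the other quantitative estimates are not affected.
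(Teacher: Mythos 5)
Your strategy diverges from the paper's in one structural respect: you keep $\psi(t)=f(x+te)$ ``pure'' and graft short connectors onto $\phi$ to make $\phi=\psi$ outside the slightly enlarged interval $[-s^*,s^*]$. The paper instead modifies $\psi$ itself, setting $\psi=f\circ h$ for a piecewise-affine curve $h$ that equals $x+te$ on $[-s/2,s/2]$ and bends to reach $x-s_1$, $x+s_2$ at $t=\mp s$; then $\phi=f\circ g$ and $\psi$ agree identically for $|t|\geq s$, so Lemma~\ref{lempreiss} delivers $\tau\in(-s,s)$ directly, with no connectors to worry about.

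The gap in your route is the step ``on each connector $\phi$ is linear with slope $[f(x+s^*e)-f(x+s_2)]/\eta$''. With $\phi(t)=f(p(t))$ and $p$ affine on the connector, $\phi$ restricted to that subinterval is merely a one-variable Lipschitz function; $f$ is not assumed smooth or affine along the connector segment in $H$, so $\phi$ need not be affine there and $\phi'(\tau)$ (at whatever $\tau$ Lemma~\ref{lempreiss} produces) is not bounded by the average slope. It can be as large as $\mathrm{Lip}(\phi)\approx 2\mathrm{Lip}(f)$, which exceeds $f'(x,e)+16\e/9$. Consequently you cannot rule out $\tau$ landing in a connector, and if it does, $x'=p(\tau)$ is not on $[x-s_1,x+\lambda']\cup[x+\lambda',x+s_2]$ as the lemma requires. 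The natural patch, redefining $\phi$ to be the affine interpolant on the connectors, fixes the linearity claim but breaks the other half of the argument: for $t$ with $\tau+t$ in a connector, $\phi(\tau+t)$ is then no longer $f(p(\tau+t))$, and the difference $|\phi(\tau+t)-f(p(\tau+t))|$ is controlled only to order $\mathrm{Lip}(f)\,|t|$, which is far too big to be absorbed into the target bound $25\sqrt{(f'(x',e')-f'(x,e))\mathrm{Lip}(f)}\,|t|$ when $\e\ll\mathrm{Lip}(f)$. The paper's trick of bending $\psi$ rather than extending $\phi$ sidesteps both problems at once: the deviation of $\psi$ from linearity near $\pm s$ is bounded pointwise by $\|h'-e\|\leq\e^2/(160\mathrm{Lip}(f)^2)$ and folded into the $\sigma L|t|$ hypothesis of Lemma~\ref{lempreiss}, and $\tau\in(-s,s)$ comes for free.
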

\begin{proof}
Define constants
$L = 4\mathrm{Lip}(f)$, $\nu = \frac{1}{80}$, $\sigma =
\frac{\e^{2}}{20L^{2}}$ and $\rho = s \sqrt{\frac{L}{2\e}}$.
Let
\begin{equation}
\label{defphipsi}
\psi(t) = f(h(t))
\textrm{  and  }
\phi(t) = f(g(t)),
\end{equation}
where $h\colon  \real \rightarrow X$ is a mapping that is affine
on each of the intervals
$(-\infty,-s/2]$ and $[s/2,\infty)$ with $h(t) = x+te$ for
$t \in [-s/2,s/2]$ and $h(-s) = x-s_{1}$, $h(s) = x+s_{2}$ while
$g\colon  \real \rightarrow X$ is
a mapping that is affine on $[-s,\xi]$ and on $[\xi,s]$ with
$g(\xi) = x+\lambda'$ and $g(t) = h(t)$  for  $|t| \geq s$.

A simple calculation shows that \eqref{ests1s2} implies
\begin{equation}
\label{h't-e}
\|h'(t)-e\| \leq 2\frac{\max(\|s_{1}-se\|,\|s_{2}-se\|)}{s}
\leq
\frac{\e^{2}}{160\mathrm{Lip}(f)^{2}}
\end{equation}
for $t \in \real \setminus \{-s/2,s/2\}$.

Now the derivative of $g$ is given by
\begin{equation}\label{g't}
g'(t)=
\begin{cases}
(\lambda'+s_{1})/(\xi+s) &\textrm{ for }t \in (-s,\xi),\\
(\lambda'-s_{2})/(\xi-s)&\textrm{ for }t \in (\xi,s).\\
\end{cases}
\end{equation}

For $t \in (-s,\xi)$,
\begin{align*}
\left\|g'(t)-\frac{\lambda+se}{\xi+s}\right\|
&\leq
2\frac{\|\lambda'-\lambda\| + \|s_{1}-se\|}{s}\\
&
\leq
\frac{\e}{8\mathrm{Lip}(f)} + \frac{\e^{2}}{160\mathrm{Lip}(f)^{2}}
\leq
\frac{\e}{4\mathrm{Lip}(f)}
\end{align*}
using $|\xi| < s/2$, \eqref{ests1s2}, \eqref{estlam} and
$\e\leq \mathrm{Lip}(f)$. Hence
\begin{equation}
\label{speed1}
\|g'(t)\| \leq 1 + \frac{\e}{2\mathrm{Lip}(f)}
\end{equation}
and
\begin{align}\label{gest}
\|g'(t)-e\| \leq 3\sqrt{\frac{\e}{\mathrm{Lip}(f)}}.
\end{align}
The former follows from \eqref{estup} and the latter from
\begin{equation*}
\left\|\frac{\lambda+se}{\xi+s} - e\right\| = \left\| \frac{\lambda-\xi e}{\xi+s}\right\| \leq 2\frac{\|\lambda-\xi e\|}{s} \leq 2\sqrt{\frac{\e}{\mathrm{Lip}(f)}},
\end{equation*}
using \eqref{lambdaxi} and $|\xi| < s/2$.
A similar calculation shows that \eqref{speed1} and \eqref{gest} hold for $t \in (\xi,s)$ too. Finally, these bounds are also true for $|t| > s$ by
\eqref{h't-e}, since then $g'(t)=h'(t)$.

We now prove that $\xi$, $s$, $\rho$, $\nu$, $\sigma$, $L$, $\phi$,
$\psi$ satisfy the conditions of Lemma~\ref{lempreiss}.

We clearly have $|\xi| < s < \rho$, $0 < \nu < \frac{1}{32}$,
$\sigma> 0$ and $L > 0$. From \eqref{h't-e} and \eqref{speed1} we have $\mathrm{Lip}(h) \leq 2$ and $\mathrm{Lip}(g) \leq 2$. Hence, by \eqref{defphipsi}, $\mathrm{Lip}(\phi) + \mathrm{Lip}(\psi) \leq 4 \mathrm{Lip}(f) = L$. Further, if $|t| \geq s$ then $g(t) = h(t)$ so that $\phi(t) = \psi(t)$.

Now as $\xi \in (-s/2,s/2)$,
\begin{align}
|\phi(\xi)-\psi(\xi)| &= |f(x+\lambda')-f(x+\xi e)|\notag\\
\label{eqphiks}
&\geq |f(x+\lambda) - f(x+\xi e)| - \mathrm{Lip}(f) \|\lambda-\lambda'\|\notag\\
&\geq 240\e s - \frac{\e s}{16} \geq 160 \e s
\end{align}
by \eqref{est160}.
Hence $\phi(\xi) \neq \psi(\xi)$.

From \eqref{defphipsi} and the definition of $h$, we see that the derivative $\psi'(0)$ exists and equals $f'(x,e)$. For $|t| \leq \rho = s\sqrt{\frac{L}{2\e}}$, we have from \eqref{eq80lip}
\begin{equation*}
|f(x+te)-f(x)-f'(x,e)t| \leq \frac{\e^{2}}{160\mathrm{Lip}(f)}|t|,
\end{equation*}
so that, together with \eqref{h't-e},
\begin{align*}
|\psi(t)-\psi(0)&-t\psi'(0)|
= |f(h(t))-f(x)-f'(x,e)t|\\
&\leq |f(x+te)-f(x)-f'(x,e)t| + \mathrm{Lip}(f)\|h(t)-x-te\| \\
&\leq \frac{\e^{2}}{160\mathrm{Lip}(f)}|t| + \frac{\e^{2}}{160\mathrm{Lip}(f)}|t|
=
\sigma L |t|.
\end{align*}

Finally, using \eqref{eqphiks},
\begin{equation*}
s\sqrt{\frac{sL}{\nu|\phi(\xi)-\psi(\xi)|}} \leq
s\sqrt{\frac{sL}{\frac{1}{80}(160\e s)}} = \rho,
\end{equation*}
\begin{equation*}
\nu^{3}\left(\frac{|\phi(\xi)-\psi(\xi)|}{sL} \right)^{2} \geq
\frac{1}{80^{3}} \left(\frac{160\e s}{sL}\right)^{2} = \sigma.
\end{equation*}

Therefore, by Lemma~\ref{lempreiss},
there exists $\tau \in (-s,s)\setminus\{\xi\}$ such that
$\phi'(\tau)$ exists and
\begin{equation}
\phi'(\tau)
\geq \psi'(0) + \nu|\phi(\xi)-\psi(\xi)|/s
\geq f'(x,e) + 2\e > 0
\label{eq2k}
\end{equation}
using \eqref{eqphiks} and $\psi'(0) = f'(x,e) \geq 0$. Further, by \eqref{phipsilempreiss}
\begin{equation}\label{phiftau}
|(\phi(\tau+t)-\phi(\tau))-(\psi(t)-\psi(0))|
\leq
5\sqrt{(\phi'(\tau)-f'(x,e))L}|t|
\end{equation}
for every $t \in \real$.

From \eqref{gest} and $\e < \mathrm{Lip}(f)/9$ we have $g'(t) \neq 0$
for any $t\in (-s,s)\setminus\{\xi\}$. Define
\begin{equation}
x' = g(\tau)
\textrm{ and }
e' = g'(\tau)/\|g'(\tau)\|.
\end{equation}
The point $x'$ belongs to
$$g((-s,s)\setminus \{\xi\}) = (x-s_1,x+\lambda') \cup (x+\lambda',x+s_2).$$
Further, since the function
$\phi$ is differentiable at $\tau$, the
directional derivative $f'(x',e')$
exists and equals $\phi'(\tau)/\|g'(\tau)\|$. Now by \eqref{speed1}, \eqref{eq2k} and $\text{Lip}(\phi) \leq 2\text{Lip}(f)$ we have
\begin{equation*}
\|g'(\tau)\| \le
\frac{2\phi'(\tau)}{\phi'(\tau)+f'(x,e)},
\end{equation*}
so that
\begin{equation}\label{loestf'x'}
f'(x',e')-f'(x,e) \geq \frac{\phi'(\tau)-f'(x,e)}{2}.
\end{equation}
Hence \eqref{derbound} follows from \eqref{eq2k}.

Together with $L =4\mathrm{Lip}(f)$ and the definitions of $\phi,\psi,x'$, the inequalities \eqref{phiftau} and \eqref{loestf'x'} give
\begin{align}\label{estphif}
&|(f(g(\tau+t))-f(x')-(f(h(t))-f(x))|\\
&\notag \leq 20\sqrt{(f'(x',e')-f'(x,e))\mathrm{Lip}(f)}|t|.
\end{align}

Using \eqref{h't-e}, \eqref{gest} and $\e \leq \mathrm{Lip}(f)$ we obtain
\begin{align*}
&\|g(\tau+t)-g(\tau)-te\| \leq 3\sqrt{\frac{\e}{\mathrm{Lip}(f)}}|t|,\\
&\|h(t)-h(0)-te\| \leq \sqrt{\frac{\e}{\mathrm{Lip}(f)}}|t|
\end{align*}
for all $t$.
Using $g(\tau) = x'$, $h(0) = x$ and the Lipschitz property of $f$,
\begin{align*}
&|f(g(\tau+t))-f(x'+te)| \leq 3\sqrt{\e\mathrm{Lip}(f)}|t|,\\
&|f(h(t))-f(x+te)| \leq \sqrt{{\e}{\mathrm{Lip}(f)}}|t|
\end{align*}
for all $t$.

Putting these together with \eqref{estphif} we get
\begin{align*}
&|(f(x'+te)-f(x')-(f(x+te)-f(x))|\\
&\leq 20\sqrt{(f'(x',e')-f'(x,e))\mathrm{Lip}(f)}|t| + 3\sqrt{{\e}{\mathrm{Lip}(f)}}|t| + \sqrt{{\e}{\mathrm{Lip}(f)}}|t|\\
&\leq 25\sqrt{(f'(x',e')-f'(x,e))\mathrm{Lip}(f)}|t|
\end{align*}
as $\e \leq f'(x',e')-f'(x,e)$. This is \eqref{estff'}. We are done.
\end{proof}

\begin{lemma}[Differentiability Lemma]\label{lemdiff}
Let $H$ be a real Hilbert space, $f\colon  H \rightarrow \real$ be a Lipschitz function and
$(x,e)\in H \times S(H)$ be such that the directional
derivative $f'(x,e)$ exists and is non-negative.
Suppose that there is a family of
sets $\{F_{\e}\subseteq H \mid\e>0\}$ such that
\begin{enumerate}
\item \label{cond-one}
whenever $\e,\eta > 0$ there exists
$\delta_*=\delta_*(\e,\eta) > 0$ such that for any $\delta \in(0, \delta_*)$ and $u_1,u_2,u_3$ in the closed unit ball of $H$, one can
find $u'_1,u'_2,u'_3$ with $\|u'_m-u_m\| \leq \eta$
and
\begin{equation*}
[x+\delta u'_1,x+\delta u'_3] \cup [x+\delta u'_3,x+\delta u'_2] \subseteq F_{\e},
\end{equation*}
\item\label{cond-two}
whenever $(x',e') \in F_\e \times S(H)$ is such that
the directional derivative $f'(x',e')$ exists,
$f'(x',e') \geq f'(x,e)$ and
\begin{align}\label{xx'}
&|(f(x'+te)-f(x'))-(f(x+te)-f(x))|\\
\notag &\leq
 25\sqrt{(f'(x',e')-f'(x,e))\mathrm{Lip}(f)}|t|
\end{align}
for every $t \in \real$ then
\begin{equation}\label{ineqless}
f'(x',e') < f'(x,e) + \e.
\end{equation}
\end{enumerate}
Then $f$ is \frechet{} differentiable at $x$ and its derivative
$f'(x)$ is given
by the formula
\begin{equation}\label{fgat}
f'(x)(h) = f'(x,e) \langle h,e \rangle
\end{equation}
for $h \in H$.
\end{lemma}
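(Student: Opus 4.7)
The plan is to argue by contradiction. Suppose $f$ is not \frechet{} differentiable at $x$ with derivative $D(h) = f'(x,e)\langle h,e\rangle$. Then there exist $\eta > 0$ and a sequence $h_n\to 0$ in $H$ with
\begin{equation*}
|f(x+h_n)-f(x)-f'(x,e)\langle h_n,e\rangle| \geq \eta\|h_n\|.
\end{equation*}
Writing $\xi_n = \langle h_n,e\rangle$, a direct combination of the existence of $f'(x,e)$ with the Lipschitz estimate on the orthogonal increment $h_n-\xi_n e$ yields, for all sufficiently large $n$,
\begin{equation*}
\|h_n - \xi_n e\| \geq \frac{\eta}{2\mathrm{Lip}(f)}\|h_n\|
\quad\text{and}\quad
|f(x+h_n)-f(x+\xi_n e)| \geq \frac{\eta}{2}\|h_n\|.
\end{equation*}
The first inequality says the $h_n$ are not asymptotically parallel to $e$; the second is precisely what is needed to verify hypothesis \eqref{est160} of Lemma~\ref{lemmax}.

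Next I would fix $\e\in(0,\mathrm{Lip}(f)/9)$ so small, as a function of $\eta$ and $\mathrm{Lip}(f)$, that all numerical constraints below close up, and pick an integer $K$ of order $\sqrt{\mathrm{Lip}(f)/\e}$. For large $n$ set $s=s_n := K\|h_n\|$, $\xi=\xi_n$, $\lambda=h_n$ and apply Lemma~\ref{lemmax}. The condition $|\xi|<s/2$ follows from $K\geq 3$ and $|\xi_n|\leq\|h_n\|$; \eqref{eq80lip} follows from existence of $f'(x,e)$ once $h_n$ is small enough; \eqref{est160} becomes $(\eta/2)\|h_n\|\geq 240\e K\|h_n\|$, which holds for the chosen $\e$; \eqref{lambdaxi} amounts to $\|h_n-\xi_n e\|\leq K\|h_n\|\sqrt{\e/\mathrm{Lip}(f)}$, secured by $K\sqrt{\e/\mathrm{Lip}(f)}\geq 1$; finally \eqref{estup} follows from the Hilbert-space Pythagorean identity $\|\pi s e+h_n\|^2 = |\pi s+\xi_n|^2 + \|h_n-\xi_n e\|^2$, reducing \eqref{estup} to $(K-1)^{-2}\leq\e/(2\mathrm{Lip}(f))$, again by the choice of $K$.

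I would then use condition~\eqref{cond-one} of the hypothesis to position the broken line produced by Lemma~\ref{lemmax} inside $F_\e$. Applying \eqref{cond-one} with tolerance $\eta_* = \e^2/(320\mathrm{Lip}(f)^2)$ and unit-ball vectors $u_1=-e$, $u_2=e$, $u_3=h_n/s_n$ (norm $1/K\leq 1$), for $n$ large enough that $s_n<\delta_*(\e,\eta_*)$ one obtains $u_1',u_2',u_3'$ within $\eta_*$ of $u_1,u_2,u_3$ such that
\begin{equation*}
[x+s_n u_1', x+s_n u_3'] \cup [x+s_n u_3', x+s_n u_2'] \subseteq F_\e.
\end{equation*}
Setting $s_1=-s_n u_1'$, $s_2=s_n u_2'$, $\lambda'=s_n u_3'$, the choice of $\eta_*$ is precisely what is needed for \eqref{ests1s2} and \eqref{estlam}. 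Lemma~\ref{lemmax} then produces $x'\in F_\e$ and $e'\in S(H)$ with $f'(x',e')\geq f'(x,e)+\e$ and estimate \eqref{estff'}, which is literally the hypothesis \eqref{xx'} of condition~\eqref{cond-two}. Applying~\eqref{cond-two} forces $f'(x',e') < f'(x,e)+\e$, contradicting \eqref{derbound}. Hence $f$ must be \frechet{} differentiable at $x$, and the only admissible derivative is $D$.

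The principal obstacle is purely quantitative: one has to calibrate $\e$ and $K$ as explicit functions of $\eta$ and $\mathrm{Lip}(f)$ so that the four hypotheses of Lemma~\ref{lemmax} are simultaneously satisfied for $s_n=K\|h_n\|$, and so that the tolerance $\eta_*$ required for $s_1$, $s_2$, $\lambda'$ matches what \eqref{cond-one} supplies as $\delta\to 0$. Once the constants line up, Lemma~\ref{lemmax} together with the segment-approximation property of $F_\e$ delivers the violating pair $(x',e')$, and the almost-maximality property~\eqref{cond-two} closes the contradiction.
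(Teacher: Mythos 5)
Your proposal is correct and follows essentially the same path as the paper's proof: assume the Fréchet condition with derivative $D(h) = f'(x,e)\langle h,e\rangle$ fails, scale the offending increment $h$ to produce the data $s,\xi,\lambda$ for Lemma~\ref{lemmax}, invoke condition~\eqref{cond-one} with tolerance $\e^2/(320\,\mathrm{Lip}(f)^2)$ to place the broken line $[x-s_1,x+\lambda']\cup[x+\lambda',x+s_2]$ inside $F_\e$, and then read off the contradiction between the output \eqref{derbound}--\eqref{estff'} of Lemma~\ref{lemmax} and condition~\eqref{cond-two}. The only cosmetic differences from the paper are that the paper normalizes $\mathrm{Lip}(f)=1$ and proves, for each $\e$, the quantitative bound $|f(x+ru)-f(x)-f'(x,e)\langle u,e\rangle r|<1000\sqrt{\e}\,r$ for small $r$ (with $s=4\e^{-1/2}r$, $u_3=\e^{1/2}u/4$), whereas you fix a would-be error $\eta$ and then choose $\e$ and $K\sim\sqrt{\mathrm{Lip}(f)/\e}$ small/large enough; these parametrizations are interchangeable. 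One tiny remark: the lower bound $\|h_n-\xi_n e\|\geq \eta\|h_n\|/(2\mathrm{Lip}(f))$ you derive is not actually needed anywhere; only the companion inequality $|f(x+h_n)-f(x+\xi_n e)|\geq(\eta/2)\|h_n\|$, feeding into \eqref{est160}, plays a role.
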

\begin{proof}
We may assume $\mathrm{Lip}(f) = 1$.
Let $\e \in (0,1/9)$. It is enough to show there exists $\Delta>0$ such that
\begin{equation}\label{difR}
|f(x+ru)-f(x)-f'(x,e)\langle u,e \rangle r| < 1000\e^{1/2} r
\end{equation}
for any $u \in S(H)$ and $r\in(0,\Delta)$.

We know that the directional derivative $f'(x,e)$ exists so that there exists $\Delta > 0$ such that
\begin{equation}\label{diffxe}
|f(x+te)-f(x)-f'(x,e)t| < \frac{\e^2}{160}|t|
\end{equation}
whenever $|t| < 8\Delta/\e$. We may pick $\Delta < \delta_*(\e,\e^2/320) \e^{1/2}/4$.

Assume now, for a contradiction, that there exist $r \in (0,\Delta)$ and $u \in S(H)$ such that the inequality
\eqref{difR} does not hold:
\begin{equation}\label{nondif}
|f(x+ru)-f(x)-f'(x,e)\langle u,e\rangle r|
\geq 1000\e^{1/2} r.
\end{equation}

Define $u_1 = -e$, $u_2 = e$, $u_3 = \e^{1/2} u/4$, $s = 4\e^{-1/2}r$, $\xi=\langle u,e\rangle r$ and $\lambda=ru$. From $\|u_m\| \leq 1$, condition
\eqref{cond-one} of the present Lemma and
\begin{equation*}
s < 4\e^{-1/2}\Delta < \delta_*(\e,\e^2/320),
\end{equation*}
there exist $u_1',u_2',u_3'$ with $\|u_m'-u_m\| \leq \e^2/320$ and
\begin{equation}\label{inFset}
[x-s_1,x+\lambda'] \cup [x+\lambda',x+s_2] \subseteq F_{\e},
\end{equation}
where $s_1 = -su_1'$, $s_2 = su_2'$ and $\lambda' = s u_3'$.

We check that the assumptions of Lemma~\ref{lemmax}
hold for $f$, $\e$, $x$, $e$, $s$, $\xi$, $\lambda$, $s_1$, $s_2$,
$\lambda'$ in the Banach space $X=H$. First we note \eqref{eq80lip} is immediate from \eqref{diffxe} as $s \sqrt{2/\e} < 8r/\e < 8\Delta/\e$. We also
have $|\xi| \leq r < s/2$ as $\e < 1$. Further $|\xi| \leq r < 8\Delta/\e$ so that we may apply \eqref{diffxe} with $t = \xi$. Combining this inequality with \eqref{nondif} we obtain
\begin{equation*}
|f(x+ru)-f(x+\xi e)|
\geq 1000\e^{1/2} r - \frac{\e^2}{160}|\xi|
> 960 \e^{1/2} r
= 240\e s.
\end{equation*}
Hence \eqref{est160}. As
$\|\lambda-\xi e\|
= r\|u-\langle u,e\rangle e\|
\leq r \leq s\sqrt{\e}$ we deduce \eqref{lambdaxi}.

Now observe that for $\pi=\pm1$,
\begin{equation*}
\frac{\pi se+\lambda}{\pi s+\xi}
= e + \frac{r}{\pi s + \xi}(u-\langle u,e \rangle e)
\end{equation*}
and, as the vectors $e$ and $u-\langle u,e \rangle e$ are orthogonal and $\|\pi s + \xi\| \geq s/2$, we obtain
$$
\left\|\frac{\pi se+\lambda}{\pi s+\xi}\right\|
\leq 1 + \frac{1}{2} \frac{r^2}{(s/2)^2}
= 1 + \frac{\e}{8}.
$$
This proves \eqref{estup}.

Since $\|u_m'-u_m\| \leq \e^2/320$, \eqref{ests1s2} follows from the definitions of $u_1,u_2,s_1,s_2$. Further as $\lambda' = su_3'$ and $\lambda = ru = su_3$ we have $\|\lambda'-\lambda\| \leq s \e^2 /320 \leq \e s/16$. Hence \eqref{estlam}.

Therefore by Lemma~\ref{lemmax} there exists $x' \in [x-s_1,x+\lambda'] \cup [x+\lambda',x+s_2]$ and $e' \in S(H)$ such that $f'(x',e')$ exists, is at least $f'(x,e)+\e$ and such that \eqref{estff'} holds. But $x' \in F_{\e}$ by \eqref{inFset}. This contradicts condition \eqref{cond-two} of the present Lemma. Hence the result.
\end{proof}

\section{Proof of main result}
\label{sec5}
Let $n \geq 2$ and $\mset_{i} \subseteq \real^{n}$ ($i \in\setseq$) be
given by \eqref{defmset}.

Recall that, by Theorem~\ref{resultofsection}~(i)--(ii), the sets $\mset_i$
are closed, have Lebesgue measure zero and $\mset_i \subseteq \mset_j$ if $i \preceq j$. Here $(\setseq,\preceq)$ is a non-empty, chain complete poset that is dense and has no minimal elements, by Lemma~\ref{posetproperties}.

The following theorem shows that if $g\colon  \real^{n} \to \real$ is Lipschitz the points of differentiability of $g$ are dense in the set

$$
\mset
=
\bigcup_{\overset{i\in\setseq}{i\prec(1,1,1,\dots)}} \mset_i.
$$

\begin{theorem}\label{thmmain}
If $k,l \in\setseq$ with $k \prec l$ and
$y \in \mset_{k}$, $d > 0$ then
for any Lipschitz function $g\colon  \real^{n} \to \real$
there exists a point $x$ of \frechet{} differentiability of
$g$ with $x \in \mset_{l}$ and $\|x-y\| \leq d$.
\end{theorem}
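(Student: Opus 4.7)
The plan is to combine Theorem~\ref{thincr} with the Differentiability Lemma~\ref{lemdiff}, using the line-segment structure provided by Theorem~\ref{resultofsection}(iii). After rescaling $g$ I may assume $\mathrm{Lip}(g)\le 1/2$, so that any Lipschitz $f$ with $f-g$ linear of norm at most $1$ automatically satisfies $\mathrm{Lip}(f)\le 1$; this is what makes the constants in Theorem~\ref{thincr} and Lemma~\ref{lemdiff} compatible.

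Since $(\setseq,\preceq)$ is dense (Lemma~\ref{posetproperties}) and $k\prec l$, I first pick $i_0,j_0\in\setseq$ with $k\prec i_0\prec j_0\prec l$; note $y\in\mset_k\subseteq\mset_{i_0}$. To produce the initial pair $(x_0,e_0)\in D^g_{i_0}$ demanded by Theorem~\ref{thincr}, I apply Theorem~\ref{resultofsection}(iii) to the indices $k\prec i_0$ at the point $y$, with $(u_1,u_2,u_3)=(-v,v,0)$ for some fixed $v\in S^{n-1}$ and a sufficiently small $\delta>0$. This produces a non-degenerate line segment $[a,b]\subseteq\mset_{i_0}$ within distance $d/2$ of $y$. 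By Lebesgue's theorem the restriction $t\mapsto g(a+t(b-a))$ is differentiable almost everywhere, so I may choose $x_0\in[a,b]$ and $e_0=(b-a)/\|b-a\|\in S^{n-1}$ with $g'(x_0,e_0)$ defined; then $(x_0,e_0)\in D^g_{i_0}$ and $\|x_0-y\|\le d/2$.

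Next I apply Theorem~\ref{thincr} with $f_0=g$, indices $i_0\prec j_0$, pair $(x_0,e_0)$, and constants $\delta_0=d/2$, $\mu=1$, $K=25$. This yields a Lipschitz function $f$ with $f-g$ linear of norm at most $1$ (hence $\mathrm{Lip}(f)\le 1$) and a pair $(x,e)\in D^f_i$ with $i\in(i_0,j_0)$, $\|x-x_0\|\le d/2$, $f'(x,e)>0$, enjoying the almost-local-maximality property: for every $\e>0$ there exist $\delta_\e>0$ and $j_\e\in(i,j_0)$ such that any $(x',e')\in D^f_{j_\e}$ with $\|x'-x\|\le\delta_\e$, $f'(x',e')\ge f'(x,e)$ and satisfying \eqref{eqincr} must obey $f'(x',e')<f'(x,e)+\e$.

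To finish, I verify the hypotheses of Lemma~\ref{lemdiff} for $f$, the pair $(x,e)$, and the family $F_\e:=\mset_{j_\e}\cap\overline{B(x,\delta_\e)}$. Condition \eqref{cond-one} follows from Theorem~\ref{resultofsection}(iii) applied to $i\prec j_\e$ at the point $x\in\mset_i$, taking $\delta_*(\e,\eta)=\min\{\alpha(i,j_\e,\eta),\delta_\e/4\}$ so that segments of length at most $2\delta$ stay in $\overline{B(x,\delta_\e)}$. For condition \eqref{cond-two}, since $\mathrm{Lip}(f)\le 1$ the bound \eqref{xx'} immediately implies \eqref{eqincr} with $K=25$; combined with $x'\in F_\e\subseteq\mset_{j_\e}$ and $\|x'-x\|\le\delta_\e$, the almost-local-maximality yields $f'(x',e')<f'(x,e)+\e$. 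Lemma~\ref{lemdiff} then gives \frechet{} differentiability of $f$ at $x$, and since $f-g$ is linear, of $g$ as well. Finally $x\in\mset_i\subseteq\mset_l$ (as $i\prec j_0\prec l$) and $\|x-y\|\le\|x-x_0\|+\|x_0-y\|\le d$. The main delicacy in the argument is the constant calibration: matching $K\sqrt{f'(x',e')-f'(x,e)}$ in Theorem~\ref{thincr} with $25\sqrt{(f'(x',e')-f'(x,e))\mathrm{Lip}(f)}$ in Lemma~\ref{lemdiff}; the prescaling $\mathrm{Lip}(g)\le1/2$ together with $\mu=1$ and $K=25$ resolves this, and the remainder is bookkeeping.
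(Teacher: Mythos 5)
Your proposal follows the paper's own argument: locate a suitable initial pair $(x_0,e_0)\in D^g_{i_0}$ via Theorem~\ref{resultofsection}(iii) and Lebesgue's theorem, feed it into Theorem~\ref{thincr}, then verify the hypotheses of the Differentiability Lemma~\ref{lemdiff} with $F_\e=\mset_{j_\e}\cap \overline{B(x,\delta_\e)}$. The only substantive difference is the handling of constants: the paper keeps $g$ unscaled and takes $K=25\sqrt{2\mathrm{Lip}(g)}$ and $\mu=\mathrm{Lip}(g)$, so that $\mathrm{Lip}(f)\le\mathrm{Lip}(g)+\mu=2\mathrm{Lip}(g)$ and hence $25\sqrt{\mathrm{Lip}(f)}\le K$; you prefer to normalise $g$ first and use a fixed $K=25$.

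There is, however, an arithmetic slip precisely where you flag the ``main delicacy.'' With $\mathrm{Lip}(g)\le 1/2$ and $\|f-g\|\le\mu=1$ you only obtain $\mathrm{Lip}(f)\le\mathrm{Lip}(g)+\|f-g\|\le 3/2$, not $\mathrm{Lip}(f)\le 1$. Consequently $25\sqrt{(f'(x',e')-f'(x,e))\mathrm{Lip}(f)}$ can exceed $25\sqrt{f'(x',e')-f'(x,e)}$, so \eqref{xx'} does not automatically give \eqref{eqincr} with $K=25$, and condition~\eqref{cond-two} of Lemma~\ref{lemdiff} is not verified as stated. The fix is immediate: take $\mu=1/2$ (or keep $\mu=1$ and set $K=25\sqrt{3/2}$, or simply follow the paper's $K=25\sqrt{2\mathrm{Lip}(g)}$, $\mu=\mathrm{Lip}(g)$ without rescaling). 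With any of these choices the argument closes exactly as you describe and is essentially the paper's proof.
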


\begin{proof}
We may assume $\mathrm{Lip}(g) > 0$. Let $H$ be the Hilbert space $\real^n$. As in Section~\ref{sec3}, for a Lipschitz function $h\colon  \real^n \to \real$ and $i \in \setseq$ we let $D^{h}_i$ be the set of pairs $(x,e) \in \mset_i \times S^{n-1}$ such that the directional derivative $h'(x,e)$ exists.

Take $i_{0} \in (k,l)$ and $j_{0} = l$.
By Theorem~\ref{resultofsection}~(iii) we can find a line segment
$\ell \subseteq \mset_{i_{0}} \cap B(y,d/2)$ of positive length.
The directional derivative of $g$ in the direction of $\ell$ exists for almost every point on $\ell$, by Lebesgue's theorem,
so that we can pick a pair $(x_{0},e_{0}) \in D^{g}_{i_{0}}$ with
$\|x_{0}-y\| \leq d/2$. Set $f_{0} = g$, $K = 25\sqrt{2\mathrm{Lip}(g)}$,
$\delta_{0} = d/2$ and $\mu = \mathrm{Lip}(g)$.

Let the Lipschitz function $f$,
the pair $(x,e)$,
the element of the index set $i \in (i_0,l)$ and, for each $\e > 0$,
the positive number $\delta_{\e}$ and the index $j_{\e} \in (i,l)$
be given by the conclusion of Theorem~\ref{thincr}. We verify
the conditions of the Differentiability Lemma~\ref{lemdiff}
hold for the function $f\colon  \real^n \to \real$, the pair $(x,e) \in D^f_i$ and the family of sets $\{F_{\e} \subseteq \real^n \mid \e > 0\}$ where
\begin{equation*}
F_{\e} =\mset_{j_{\e}} \cap B(x,\delta_{\e}).
\end{equation*}

We know from Theorem~\ref{thincr} that the derivative $f'(x,e)$ exists and is non-negative.
To verify condition \eqref{cond-one} of Lemma~\ref{lemdiff}, we may take $\e > 0$, $\eta \in (0,1)$ and put
$$
\delta_* =
\min(\alpha(i,j_{\e},\eta),\delta_{\e}/2),
$$
where $\alpha(i,j_\e,\eta)$ is given by Theorem~\ref{resultofsection}~(iii), noting $\delta(1+\eta) < 2\delta_* \leq \delta_{\e}$
for every $\delta\in(0,\delta_*)$.
Condition \eqref{cond-two} of Lemma~\ref{lemdiff}
is immediate from the definition of
$F_{\e}$ and equation \eqref{eqincr} as $\mathrm{Lip}(f) \leq \mathrm{Lip}(g) + \mu = 2\mathrm{Lip}(g)$ so that
$25\sqrt{\mathrm{Lip}(f)} \leq K$.

Therefore, by Lemma~\ref{lemdiff} the function
$f$ is differentiable at $x$.
So too, therefore, is $g$ as $g-f$ is linear.
Finally, note that  $x \in \mset_i \subseteq \mset_l$ and
\begin{equation*}
\|x-y\| \leq \|x-x_{0}\| + \|x_{0}-y\| \leq \delta_0 + d/2 = d.
\end{equation*}
\end{proof}

\begin{corollary}\label{corfinal}
If $n \geq 2$ there exists a compact subset $S \subseteq \real^{n}$ of measure $0$ that contains a point of \frechet{} differentiability of every Lipschitz function $g\colon  \real^{n} \rightarrow \real$.
\end{corollary}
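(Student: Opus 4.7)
The plan is to take $S$ to be a bounded slice of one of the sets $\mset_l$ constructed in Section~\ref{sec2}, relying on Theorem~\ref{thmmain} to supply differentiability points of every Lipschitz function. Concretely, I would first invoke Lemma~\ref{posetproperties} to pick elements $k, l \in \setseq$ with $k \prec l$; such a pair exists because $(\setseq,\preceq)$ is non-empty, dense, and has no minimal element. Then I would pick any $y \in \mset_k$ (possible because $\mset_k$ is non-empty by Theorem~\ref{resultofsection}(i)) and fix any $d > 0$, setting
$$
S = \mset_l \cap \overline{B(y,d)}.
$$

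Next I would verify that $S$ has the required three properties. Compactness is immediate: $\mset_l$ is closed by Theorem~\ref{resultofsection}(i) and $\overline{B(y,d)}$ is closed and bounded, so $S$ is closed and bounded in $\real^n$. The Lebesgue measure of $S$ vanishes because $\mset_l$ itself has measure zero, again by Theorem~\ref{resultofsection}(i). Finally $S$ is non-empty, since Theorem~\ref{resultofsection}(ii) gives $\mset_k \subseteq \mset_l$ and hence $y \in \mset_l$, so $y \in S$.

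It remains to show that $S$ captures a point of \frechet{} differentiability of every Lipschitz function $g\colon \real^n \to \real$. This is exactly the content of Theorem~\ref{thmmain} applied to our choice of $k$, $l$, $y$, and $d$: it produces a point $x$ of \frechet{} differentiability of $g$ with $x \in \mset_l$ and $\|x-y\| \leq d$, so that $x \in \mset_l \cap \overline{B(y,d)} = S$.

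There is no serious obstacle here, since the deep work has already been done in Theorem~\ref{thmmain}. The only point requiring any care is the selection of $k \prec l$ together with a point $y \in \mset_k$, which is precisely what Lemma~\ref{posetproperties} (specifically the absence of minimal elements) and Theorem~\ref{resultofsection}(i)--(ii) are set up to guarantee.
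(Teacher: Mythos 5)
Your proof is correct and follows essentially the same route as the paper's: fix $k \prec l$ using the absence of minimal elements, take $y \in \mset_k$, set $S = \mset_l \cap \overline{B(y,d)}$, and invoke Theorem~\ref{thmmain}. The only (cosmetic) difference is that you additionally point out $S$ is non-empty and spell out why $y \in \mset_l$ via Theorem~\ref{resultofsection}(ii), which the paper leaves implicit.
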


\begin{proof}
Let $l \in \setseq$. As $l$ is not minimal we can find $k \prec l$.
Now $\mset_{k} \neq \emptyset$ so that we may pick
$y \in \mset_{k}$. Let $S = \mset_{l} \cap \overline{B(y,d)}$ where $d > 0$.
We know $S$ is closed and has measure zero. As it is bounded it is
also compact. If $g\colon  \real^{n} \rightarrow \real$ is Lipschitz then by
Theorem~\ref{thmmain} we can find a point $x$ of differentiability of
$g$ with $x \in \mset_{l}$ and $\|x-y\| \leq d$, so that $x \in S$.
\end{proof}

\bib
\end{document}